\newtheorem{thm}{Theorem}
\newtheorem{lem}[thm]{Lemma}
\newtheorem{cor}[thm]{Corollary}
\newtheorem{prop}[thm]{Proposition}
\newtheorem{rk}[thm]{Remark}
\newtheorem{problem}[thm]{Problem}
\newcommand{\CPb}{\overline{\mathbb{CP}}{}^{2}}
\newcommand{\CP}{{\mathbb{CP}}{}^{2}}
\newcommand{\K}{\textbf{K3}}
\newcommand{\Kb}{\overline{\textbf{K3}}}
\newcommand{\Z}{\mathbb{Z}}
\def \x {\times}
\def \eu{{\text{e}}}
\begin{document}

\title[Broken Lefschetz fibrations and smooth structures on $4$-manifolds] {Broken Lefschetz fibrations and \\ smooth structures on $4$-manifolds}
\vspace{0.2in} 

\author[R. \.{I}. Baykur]{R. \.{I}nan\c{c} Baykur}
\address{Max Planck Institute for Mathematics, Bonn, Germany \newline
\indent Department of Mathematics, Brandeis University, Waltham MA, USA}
\email{baykur@mpim-bonn.mpg.de, baykur@brandeis.edu}

\begin{abstract} The broken genera are orientation preserving diffeomorphism invariants of closed oriented $4$-manifolds, defined via broken Lefschetz fibrations. We study the properties of the broken genera invariants, and calculate them for various $4$-manifolds, while showing that the invariants are sensitive to exotic smooth structures.
\end{abstract}

\maketitle

\vspace{-0.2in} 

\section{Introduction}

Broken Lefschetz fibrations are generalizations of Lefschetz fibrations on smooth $4$-manifolds, which are allowed to have indefinite fold singularities along embedded circles in addition to Lefschetz type singularities on a discrete set. In the recent past, there has been a flurry of activity around broken Lefschetz fibrations, extending ideas stemmed in symplectic geometry and gauge theory on one end by Auroux-Donaldson-Katzarkov, and Perutz \cite{ADK,P}, and employing handlebody and singularity theories to suggest new ways to study the topology of $4$-manifolds on the other end by the author, Saeki, Gay-Kirby, Lekili, Akbulut-Karakurt, Williams, Hayano, and others \cite{AK, B1, B2, B3, BK, SB, C, H1, H2, HS1, HS2, GK1, GK2, L, Sa, W}. Given any surjective map from a closed oriented $4$-manifold $X$ to the $2$-sphere, there exists a rather special broken Lefschetz fibration on $X$ within the same homotopy class, with only connected fibers and no exceptional spheres contained on the fibers, with at most one circle of indefinite fold singularities whose image in the base is embedded, and where all the Lefschetz critical points lie on fibers with the highest genera. These are called \emph{simplified broken Lefschetz fibrations} (SBLF in short), and constitute an important subfamily of broken Lefschetz fibrations, allowing one to study the underlying topology effectively. (See for instance \cite{B1, B2, B3, BK, H1, H2, HS1, HS2}.) The underlying topology of a simplified broken Lefschetz fibrations is rather simple: It is either a relatively minimal genus $g$ Lefschetz fibration over the $2$-sphere, or it decomposes as a relatively minimal genus $g$ Lefschetz fibration over a $2$-disk, a trivial genus $g-1$ bundle over a $2$-disk, and a fibered cobordism in between prescribed by a single round handle \cite{B1}.

Given a closed smooth oriented $4$-manifold $X$ and a homology class $\alpha \in H_2(X; \Z)$ with $\alpha^2 = m \geq 0$, let $\tilde{X} \cong X\#m\CPb$ be the blow-up of $X$ at $m$ points and $\tilde{\alpha}= {\iota}_*(\alpha)- \sum_i e_i$, where $\iota$ is the inclusion into $\tilde{X}$ of the complement of the blown-up points in $X$ and $e_i$ are the homology classes of the exceptional spheres. The \emph{broken genus} of $(X, \alpha)$ is the smallest non-negative integer $g$ among the genera of all possible SBLFs on $\tilde{X}$ whose fibers represent the class $\tilde{\alpha}$, which we denote by $g(X, \alpha)$. The \emph{broken genus series} of $X$, denoted by $S_g(X)$, is a formal sum in the integral group ring $\Z[H_2(X)]$, defined as 
\[ 
S_g(X)  \, = \sum_{\alpha \in H_2(X)\, , \, \alpha^2=0} \, g(X, \alpha) \, t_{\alpha} \, \, \,  + \sum_{\beta \in H_2(X)\, , \, \beta^2 >0} \, g(X, \beta) \, t_{\beta} \, .
\]
We then define the \emph{broken genus} of $X$, denoted by $g(X)$, as the smallest coefficient that appears in the first series on the right hand side. As shown in the next section, the broken genus of such a pair $(X, \alpha)$, the broken genus series, and the broken genus of $X$ are all orientation preserving diffeomorphism invariants. 

The purpose of this article is two-fold: We aim to initiate a study of smooth structures on $4$-manifolds (and smooth embeddings of surfaces) via broken genus invariants. Our second goal is to suggest a systematic framework for the rapidly growing literature on the topology of broken Lefschetz fibrations on $4$-manifolds by considering SBLFs of fixed genus. We are going to use both handlebody and singularity theory arguments at times, whichever comes handy depending on the context. Clearly, in either case we essentially work with maps with prescribed singularities, i.e Morse or indefinite generic singularities. The freedom of choice we give ourselves to switch between the two approaches rests in the following distinction: The former approach has the advantage of explicitly prescribing the fibration \textit{and} the topology of the underlying $4$-manifold, whereas the latter approach is more useful for implicit modifications of \textit{given} fibrations whenever one only needs to keep track of some rough information such as the genus of the fibration. In particular, simplified purely wrinkled fibrations (SPWF in short) studied by Williams \cite{W}, which have similar topological properties as SBLFs except for having cusps instead of Lefschetz singularities, appear in several intermediary steps of our arguments. Since one can turn SBLFs into SPWFs in a straightforward fashion, all of our work here can be rephrased in terms of simplified purely wrinkled fibrations easily. 

The organization of the article and a summary of the results are as follows: 

In Section \ref{Preliminaries}, we review the well-definedness of the broken genera invariants and give various preparatory results. In Sections \ref{Properties} and \ref{Exotic}, we present some calculational tools and provide many calculations of the broken genera of $4$-manifolds, both using handlebody and singularity theory arguments. We get lower and upper bounds on the broken genera, which vary from purely topological (i.e. based solely on the homeomorphism type) bounds (Lemmas \ref{euler} and \ref{pi1}) to smooth ones (Propositions \ref{Thom} and \ref{orientation}, Theorem \ref{SPWFconnected}). Here we show that the broken genera invariants can attain arbitrarily large values (Proposition~\ref{unbounded}), even on pairs in a fixed homeomorphism class (Theorem~\ref{knotsurgery}). 

In Section \ref{Exotic}, we show how broken genera invariants are sensitive to exotic smooth structures on various $4$-manifolds, most remarkably, on standard simply connected $4$-manifolds. Theorem~\ref{constructions} calculates the broken genera of standard simply-connected $4$-manifold blocks, and in particular, presents many examples of SBLFs. Theorem~\ref{SW} and Corollary~\ref{symplectic} (and also Theorem~\ref{knotsurgery}) demonstrate that the broken genera of exotic simply-connected $4$-manifolds are higher, relying on Freedman's topological classification of simply-connected $4$-manifolds and on Seiberg-Witten theory. The results contained in this section, and more specifically the construction of genus one simplified broken Lefschetz fibrations have been promised for some time
and were presented by the author during the Four Dimensional Topology Conference in Hiroshima in 2010. 

It has been asked by various authors (\cite{P, B0, W}) whether or not this sort of invariants encode information regarding the minimal genus representative of a \linebreak $2$-dimensional homology class on a $4$-manifold. Section~\ref{BrokenVersusMinimal} contains a discussion on this topic, where we focus on showing how broken genus relates to and differ from minimal genus. The final section lists some related open problems.

\newpage

\section{Preliminaries and background results} \label{Preliminaries}

Let $X$ and $\Sigma \, $ be compact connected oriented manifolds (with or without boundary) of dimension four and two, respectively, and $f\colon\,  X \to \Sigma \, $ be a smooth surjective map with $f^{-1}(\partial \Sigma) = \partial X$. The map $f$ is said to have a Lefschetz singularity at a point $x$ contained in a discrete set $C=C_f \subset Int(X)$, if around $x$ and $f(x)$ one can choose orientation preserving charts so that $f$ conforms the complex local model 
\[(u, v) \to u^2 + v^2 \,. \]
The map $f$ is said to have an \emph{indefinite fold} or \textit{round singularity} along an embedded $1$-manifold $Z=Z_f \subset Int(X) \setminus \, C$\, if around every $z \in Z$, there are coordinates $(t, x_1, x_2, x_3)$ with $t$ a local coordinate on $Z$, in terms of which $f$ can be written as 
\[(t, x_1, x_2, x_3) \to (t, x_1^2 - x_2^2 - x_3^2) \,.\]
We call the image $f(Z) \subset Int(\Sigma)$ the \textit{round image}. A \textit{broken Lefschetz fibration} is then defined as a smooth surjective map $f\colon\,  X \to \Sigma \, $ which is submersion everywhere except for a finite set of points $C$ and a finite collection of circles $Z \subset X \setminus C$, where it has Lefschetz singularities and round singularities, respectively. In particular, it is an honest surface bundle over $\partial \Sigma$.

Let us also recall the simplest types of singularities for smooth maps. Let $y \in Int(X)$ be a point where $\text{rank}(d f_y) < 2$. The map $f\colon\, X\to \Sigma$ is said to have a \emph{fold singularity} at a point $y \in Int(X)$, if around $y$ and its image one can choose local coordinates so that the map is given by
\[(t, x_1, x_2, x_3) \mapsto (t, \pm x_1^2 \pm x_2^2 \pm x_3^2) ,\]
and a \emph{cusp singularity} if instead the map is locally given by
\[(t, x_1, x_2, x_3) \mapsto (t, x_1^3 + t x_1 \pm x_2^2 \pm x_3^2) . \]
We say that $y$ is a \emph{definite fold point} if all the coefficients of $x_i^2$, $i=1,2,3$ in the first local model is of the same sign. It is called an \emph{indefinite fold point} otherwise. From a special case of Thom's transversality theorem it follows that any smooth map from an $n$-dimensional manifold to a $2$-manifold, for $n \geq 2$, can be approximated arbitrarily well by a map with only fold and cusp singularities \cite{Lev0}. A smooth map $f\colon\, X \to \Sigma$ with only fold and cusp singularities is called a \emph{generic map}. The singular set of $f$ is a finite set of circles, which are composed of finitely many cusp points and a finite collection of arcs and circles of fold singularities. From the very definitions we see that round singularities are circles of indefinite fold singularities. As the set of generic maps is open and dense in $C^{\infty}(X, \Sigma)$ topologized with the $C^{\infty}$ topology, any broken Lefschetz fibration can be homotoped to a map with only fold and cusp singularities. 

As shown in \cite{Sa, B2}, any generic map from a closed orientable $4$-manifold to the $2$-sphere can be homotoped to a broken Lefschetz fibrations over $\Sigma = S^2$, and thus, these fibrations are found in abundance. Moreover, any given framed surface $F$ with trivial normal bundle in $X$ can be chosen to be a fiber of such a fibration \cite{B2, GK1}. We will say that $(X, \alpha)$ \textit{admits a broken Lefschetz fibration} whenever there is a broken Lefschetz fibration on $X$ whose fiber is in the homology class of $\alpha$. Note that $\alpha^2=0$ is therefore a necessary condition.

Whenever there is a fiber in $X$ containing a self-intersection $-1$ sphere, it can be blown-down to obtain a new broken Lefschetz fibration on $X'$ without destroying the fibration structure on the rest. We will therefore focus on \textit{relatively minimal broken Lefschetz fibrations} (possibly on \textit{non-minimal} $4$-manifolds, which do not contain such fiber components.

\textit{Simplified broken Lefschetz fibrations} make up a subfamily of broken Lefschetz fibrations subject to the following additional conditions: The base surface of the fibration is $S^2$, the round image is connected (possibly empty) and the round image is embedded, the fibration is relatively minimal, and whenever $Z \neq \emptyset$, all the fibers are connected and all the Lefschetz singularities lie over the $2$-disk component of $S^2 \setminus f(Z)$ over which fibers have higher genera. As we will see below, in any homotopy class of a surjective map from a given closed oriented $4$-manifold $X$ to $S^2$, there exists a simplified broken Lefschetz fibration. The fibration is composed of three pieces when the round locus is non-empty: If we take an annular neighborhood $A$ of the round image avoiding the Lefschetz singularities, then decomposing the base as $S^2 = D^2 \cup A \cup D^2$ we get a genus $g-1$ surface bundle over $D^2$, called the \textit{lower side}, a genus $g$ Lefschetz fibration over $D^2$, called the \textit{higher side}, and a round cobordism in between composed of only one round handle. It is then easy to give a handlebody decomposition of $X$ prescribing this simplified broken Lefschetz fibration, as shown in \cite{B1}. The genus $g$ of the higher side of an SBLF $f\colon\,  X \to S^2$ is called the \textit{genus of} $f$.\footnote{A word of caution here: We define the genus of the SBLF as the genus of the \emph{higher side} fiber, which is in agreement with \cite{BK, H1, H2, HS1, HS2} and is equal to the genus of an honest Lefschetz fibration. The same goes for SPWFs. Unlike in \cite{W}, our broken genus invariant picks up its value among these higher side genera.} Similarly, \textit{simplified purely wrinkled fibrations} make up a subfamily of indefinite generic maps mimicking the conditions we had for SBLFs: The base surface is $S^2$, the singular locus is a connected $1$-manifold with embedded image which is smooth everywhere except at the images of a \textit{non-empty} set of cusp points, and all the fibers are connected. These fibrations were introduced and studied by Williams in \cite{W}. 

There are various topological modifications of broken Lefschetz fibrations and indefinite generic maps defined via handlebody operations or homotopy moves between generic maps without definite folds. For the details of these moves we will employ in this article, the author is advised to turn to \cite{B1, B2} and \cite{L, W}.  

We start with a few observations:

\begin{lem} \label{push}
If $(X, \alpha)$ admits a genus $g$ broken Lefschetz fibration with non-empty round locus, then $(X, \alpha)$ admits a homotopic genus $g$ broken Lefschetz fibration with all the Lefschetz singularities on the higher side.  
\end{lem}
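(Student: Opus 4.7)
The plan is to show that any single Lefschetz critical value of $f$ lying on the lower side can be moved, via a homotopy of $f$ supported in a neighborhood of an arc from that critical value to the round image, to the higher side; since the set of Lefschetz critical points is finite, iterating this local move finitely many times (with disjoint supports) proves the lemma.

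For the key migration step I would adopt the handle-theoretic viewpoint from \cite{B1}. Near the round image, the total space of $f$ decomposes as a genus $g-1$ surface bundle over $D^2$ with a round $1$-handle attached, producing a genus $g$ fiber on the far side. Topologically, a round $1$-handle amounts to a $4$-dimensional $1$-handle attached to the trivial bundle along two points in the lower fiber $F_{g-1}$. Now take a Lefschetz $2$-handle on the lower side attached along a vanishing cycle $c \subset F_{g-1}$. Since the attaching region of the round $1$-handle consists of two points in $F_{g-1}$, an isotopy of $c$ (which corresponds to isotoping the critical value of $f$ inside the lower side) may be used to make $c$ disjoint from those points. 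After this isotopy, $c$ sits unchanged inside the post-round-handle fiber $F_g$, so the Lefschetz $2$-handle may be slid past the round $1$-handle to become attached along $c \subset F_g$ on the higher side. This handle swap realizes a homotopy of $f$ that transports the chosen Lefschetz critical point across the round image.

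Equivalently, and for a more purely singularity-theoretic argument, one could apply Lekili's flipping/wrinkling move \cite{L} at the Lefschetz critical point, converting it into a small cuspidal circle of indefinite folds and yielding a purely wrinkled fibration locally. One then uses a sequence of cusp-merging homotopies, as studied in \cite{L, W}, to slide the small cuspidal circle across the round image, and finally unwrinkles it back into a single Lefschetz critical point on the higher side.

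The main thing to verify is that the genus of the fibration is unchanged throughout, i.e., that the generic fibers on the higher side remain of genus $g$. This is clear since all modifications are supported in a small neighborhood of the chosen arc, and none of the elementary moves involved (handle slide, wrinkle, cusp merge, unwrinkle) alters the topology of the regular fibers on the higher side. The homotopy class of $f$ is preserved for the same reason: each elementary step is a smooth homotopy of maps rather than a diffeomorphism of the source. I expect the only delicate point of a fully detailed proof to be the verification that the isotopy of the vanishing cycle $c$ off of the round attaching region can always be realized by an ambient homotopy of the fibration; this is however straightforward because two points in $F_{g-1}$ can always be pushed off any simple closed curve by a fiber-preserving isotopy.
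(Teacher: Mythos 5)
Your proof is correct and follows essentially the same route as the paper, whose own proof simply defers to \cite{B1}: the content of that reference is precisely your handle-theoretic observation that a Lefschetz vanishing cycle $c$ in the lower-genus fiber can be isotoped off the two attaching points of the fiberwise (round) $1$-handle, so the two fiberwise handle attachments commute and the Lefschetz critical value can be pushed across the round image to the higher side. (Only cosmetic imprecisions: a round $1$-handle contributes a $4$-dimensional $1$-handle \emph{and} a $2$-handle, and the relevant move is a reordering of disjointly attached fiberwise handles rather than a handle slide; neither affects the argument.)
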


\begin{proof}
The proof of this was given in \cite{B1} for near-symplectic broken Lefschetz fibrations, that is, for broken Lefschetz fibrations with the additional property that there is a second cohomology class evaluating non-trivially on every fiber component. The same proof can be adapted mutatis mutandis to this general setup. 
\end{proof}

\begin{lem} \label{relativeminimality}
If $(X, \alpha)$ admits a genus $g$ SBLF with non-empty round locus, then $(X\#\CPb, \tilde{\alpha})$ where the class $\tilde{\alpha}$ represents the image of $\alpha$ under the homomorphism induced by the inclusion map $X \setminus D^4 \hookrightarrow \tilde{X}$, also admits a genus $g$ SBLF. If $(X, \alpha)$ admits a genus $g$ broken Lefschetz fibration with non-empty round locus, meeting all the additional conditions listed for SBLFs but the relative minimality, then $(X, \alpha)$ admits a genus $g$ SBLF as well. 
\end{lem}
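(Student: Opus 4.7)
The plan is to establish part~(2) first and then derive part~(1) from it.

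For part~(2), suppose $f\colon X \to S^2$ is a genus $g$ broken Lefschetz fibration with non-empty round locus satisfying every SBLF condition except relative minimality. Any $(-1)$-sphere contained in a fiber must lie on the higher side, since the lower side is a trivial genus $g-1$ bundle over a disk with smooth connected fibers. Such a sphere necessarily arises as an exceptional component of a reducible fiber at a Lefschetz critical point whose vanishing cycle bounds an embedded disk in a nearby smooth fiber. I would proceed by induction on the number of such $(-1)$-spheres. For the inductive step, I would first wrinkle the offending Lefschetz critical point into a small indefinite fold circle with three cusps as in Lekili~\cite{L} and Williams~\cite{W}. Because the original vanishing cycle bounded a disk in the fiber, the newly created small fold circle can be merged with the pre-existing round circle through a sequence of cusp-merge and flip moves in the spirit of~\cite{B2, W}, producing again a single embedded round image. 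The Lefschetz critical point is thereby traded for additional local structure on the round cobordism, while the total space, the regular fiber class, the genus, and connectedness of all fibers are preserved. Iterating removes every $(-1)$-sphere from the fibers and yields the required genus $g$ SBLF on $X$.

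For part~(1), take the given genus $g$ SBLF $f\colon X \to S^2$ and choose a point $p$ in the interior of a regular fiber on the higher side, disjoint from the Lefschetz critical locus. Blowing up $X$ at $p$ produces $X\#\CPb$ together with a canonically induced genus $g$ broken Lefschetz fibration $\tilde f\colon X\#\CPb \to S^2$: in local complex coordinates near $p$ the map $(u,v)\mapsto v$ pulls back through the blow-up to introduce one new Lefschetz critical point with null-homotopic vanishing cycle, so the fiber through $p$ becomes the nodal union $\tilde F \cup E$ of the proper transform with the exceptional sphere, while every other fiber is unchanged. The new fibration $\tilde f$ inherits the non-empty round locus of $f$ and satisfies every SBLF condition except relative minimality, with regular fiber in the class $\iota_*(\alpha) = \tilde\alpha$. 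Applying part~(2) to $\tilde f$ then produces a genus $g$ SBLF on $(X\#\CPb,\tilde\alpha)$.

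The main obstacle is the inductive move in part~(2): producing an explicit combined modification that removes a single $(-1)$-sphere contained in a fiber while simultaneously preserving the total space, the regular fiber class, the genus, the connectedness and embeddedness of the round image, the connectedness of all fibers, and the requirement that every Lefschetz critical point still lies on the higher side. Wrinkling the offending Lefschetz singularity and absorbing the resulting fold arc into the existing round circle via cusp-merge moves is the natural candidate, and the toolkit of~\cite{L, W, B1, B2} should suffice, but verifying that each SBLF-defining condition survives the combined modification without introducing new $(-1)$-spheres elsewhere is the technical heart of the argument.
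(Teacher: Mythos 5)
Your reduction of the first claim to the second via blowing up at a point of a higher-side fiber is fine, and it is true that the resulting new Lefschetz critical point has null-homotopic vanishing cycle; the paper happens to run the logic in the opposite direction, but that is only a structural difference. The real problem is that the inductive step of your part~(2) --- which you yourself flag as the technical heart --- is not carried out, and the move you propose is unlikely to work as stated. First, wrinkling a Lefschetz critical point whose vanishing cycle bounds a disk in the fiber produces a small three-cusped fold circle whose \emph{interior} region has fibers obtained by compressing a genus $g$ fiber along a null-homotopic curve: these fibers are \emph{disconnected} (a genus $g$ piece plus a sphere), so connectedness of fibers is immediately violated and must be repaired. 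Second, the repair toolkit you invoke is exactly the one used in Lemma~\ref{SBLFvsSPWF} and Theorem~\ref{SPWFconnected}, where inverse-merging the wrinkled circles into the round circle and applying flip-and-slip raises the fiber genus by one; a flip always increases the genus of the nearby fibers, so your claim that ``the genus is preserved'' through a sequence of cusp-merge and flip moves contradicts the genus bookkeeping everywhere else in this circle of ideas. Nor can you sink the offending Lefschetz point into the round circle, since sinking requires its vanishing cycle to meet the round vanishing cycle transversally in one point, which a null-homotopic cycle cannot do.

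The paper's proof avoids all of this with a single handle slide, and you should compare your approach against it. In the standard handlebody picture of an SBLF with non-empty round locus \cite{B1}, the exceptional $2$-handle (whether it comes from the blow-up or from a non-essential Lefschetz vanishing cycle) is a $(-1)$-framed unknot unlinked from the $1$-handles; sliding it over the $0$-framed $2$-handle of the round $2$-handle reattaches it along a curve on the higher-side fiber parallel to the attaching circle of the round handle, with fiber framing $-1$. Since that curve is essential, the handle is now an honest Lefschetz $2$-handle with essential vanishing cycle on the higher side, so relative minimality, the genus, the fiber class, and the diffeomorphism type are all preserved at once --- there is no induction, no disconnection of fibers, and no genus increase to control. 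If you want to salvage a singularity-theoretic version of your argument you would need a move that trades a null-homotopic Lefschetz vanishing cycle for an essential one without passing through a flip, which is precisely what the handle slide accomplishes.
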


\begin{proof}
The first claim is best seen from the handle decomposition of the SBLF on $X$ whose fiber is in the homology class of $\alpha$. Blowing-up $X$ adds a $(-1)$-framed unknotted $2$-handle to the handle diagram, which can be slid over the $0$-framed $2$-handle of the round $2$-handle so that it is now attached to the fiber as a Lefschetz handle. (See Figure~\ref{trivialgenusoneSBLFs} for an example.) If we have a broken Lefschetz fibration which is \textit{not} relatively minimal, then the Lefschetz $2$-handles contributing the exceptional spheres on the fibers would have null-homotopic vanishing cycles. These are \linebreak $(-1)$-framed unknotted $2$-handles which do not link with the $1$-handles of the standard handlebody diagram, so we can apply the same trick above so as to get an SBLF of the same genus. 
\end{proof}

\begin{lem} \label{connected}
If $(X, \alpha)$ admits a broken Lefschetz fibration with connected round locus and embedded round image, then it also admits a homotopic SBLF (which possibly has fibers of different genera).
\end{lem}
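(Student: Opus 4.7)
The plan is to take the given BLF $f\colon X\to S^2$, whose round locus is a single embedded circle $Z$ with embedded image in $S^2$, and modify it within its homotopy class until it satisfies all the remaining SBLF axioms. Comparing with the definition of SBLF, what is still missing is (i) that all fibers are connected, (ii) relative minimality, and (iii) that all Lefschetz singularities lie on the higher-genus side. Conditions (iii) and (ii) are handled, in this order, by Lemma~\ref{push} and the second part of Lemma~\ref{relativeminimality} once (i) is in place; so the heart of the argument is achieving connected fibers while keeping the round image a single embedded circle, at the possible cost of raising the genus.

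First I would set $S^2\setminus f(Z)=D_+\sqcup D_-$, where $D_+$ parametrizes the higher-genus side. If the fibers over both $D_\pm$ are already connected, nothing is needed for (i) and we pass directly to applying Lemmas~\ref{push} and \ref{relativeminimality}. Otherwise, there is a regular value $p$ whose fiber $F_p$ has two components $F'\sqcup F''$, and I would tube these together by a local modification. Choose an arc $\gamma$ in the base ending at $p$ and covered by an arc in $X$ with endpoints on $F'$ and $F''$; by a standard wrinkling introduced in a tubular neighborhood of $\gamma$ (a homotopy among generic maps, as catalogued in \cite{L, W}), one can create a small auxiliary round locus whose crossing merges $F'$ and $F''$ into a single component of higher genus. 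This auxiliary circle is then slid across the base and absorbed into $Z$ by a merge move between round loci, again a standard homotopy among maps with only indefinite generic singularities, so that the round image remains a single embedded circle in $S^2$. Iterating until no fiber on either side is disconnected produces a BLF homotopic to $f$ for which condition (i) holds.

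Main obstacle: the combinatorial bookkeeping of the auxiliary round loci and the verification that at each stage they can genuinely be merged into $Z$ without leaving the class of BLFs with connected embedded round image. This is also the step that forces the possible change of genus mentioned in the statement, since each tubing move raises the genus on the corresponding side by one. Once connected fibers are in place, Lemma~\ref{push} moves all Lefschetz critical values to the higher-genus disk, and then Lemma~\ref{relativeminimality} upgrades the result to a relatively minimal SBLF, yielding the desired SBLF in the homotopy class of $f$.
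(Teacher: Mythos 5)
Your reduction to conditions (i)--(iii) and the order in which you invoke Lemma~\ref{push} and Lemma~\ref{relativeminimality} match the paper, but the heart of the lemma --- achieving connected fibers --- is exactly where your argument has a genuine gap, and it is not the route the paper takes. The paper's proof is essentially one line: it applies the flip-and-slip move of \cite{B2} (Figure~\ref{Figure: Flip and slip}) to a disk neighborhood of the round image. That move is a single global homotopy designed precisely so that, after two flips of the round circle and a slide of one of the resulting fold arcs all the way around, the fibers over the inner region become connected (of genus two higher), the four cusps are traded for Lefschetz points, and the round image remains one embedded circle.

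Your local tubing strategy does not achieve this. If you wrinkle along an arc joining $F'$ and $F''$ over a point $p\in D_-$, the auxiliary fold circle $C$ you create only connects the fibers over the \emph{interior} of $C$; over $D_-\setminus \mathrm{int}(C)$ the fiber is still $F'\sqcup F''$. Absorbing $C$ into $Z$ by an (inverse) merge along an arc $\delta\subset D_-\setminus \mathrm{int}(C)$ does not remove this region: the new single fold circle is the band sum of $Z$ and $C$ along $\delta$, and its complement still contains the component $\bigl(D_-\setminus \mathrm{int}(C)\bigr)\setminus \nu(\delta)$, over which the fiber remains disconnected. Moreover, the inverse merge as used in \cite{L,W} (and in Theorem~\ref{SPWFconnected} of the paper) requires cusps on the two circles pointing toward each other, or at least a matching vanishing-cycle configuration; $Z$ is a cusp-free round circle, so you would first have to flip it to create such cusps --- at which point you are reconstructing flip-and-slip. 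So the step you flag as the ``main obstacle'' is not bookkeeping: it is the actual content of the lemma, and the iteration you describe does not terminate with connected fibers. The fix is to replace the tubing-and-merge step by a direct appeal to the flip-and-slip homotopy of \cite{B2} applied to the disk bounded by the round image (and, in the degenerate case of empty round locus, to first introduce a ``button''/birth as in \cite{P,B1,L,W}).
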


\begin{proof}
The homotopy in question to obtain a broken Lefschetz fibration with connected fibers is presented in \cite{B2}, as a result of the flip-and-slip move introduced there. (See Figure~\ref{Figure: Flip and slip}.) If the round locus is empty, one can locally introduce a ``button'' described in \cite{P, B1} (equivalently, introduce a ``birth'' singularity and turn the two cusps into Lefschtez singularities as in \cite{L, W}). We can then apply the previous lemma to guarantee the relative minimality of the fibration. 
\end{proof}

\begin{figure}[ht] 
\begin{center}
\includegraphics[scale=0.6]{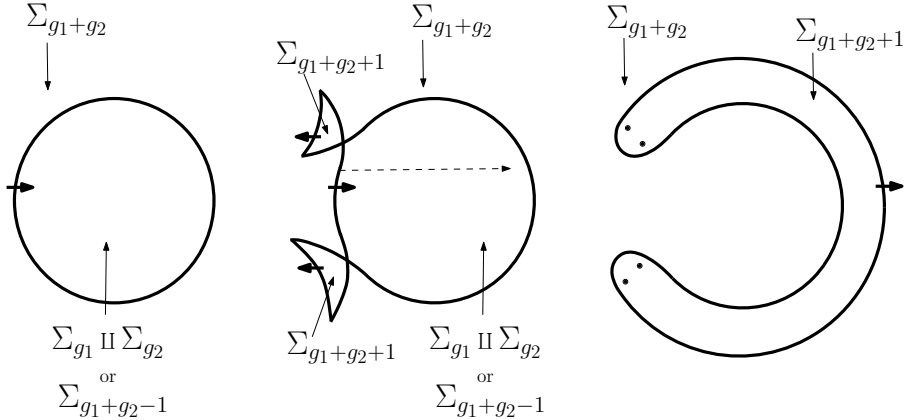}
\caption{\small ``Flip and slip'' move. On the left: The disk neighborhood of the round image of the original fibration with either disconnected fibers or with fibers of smaller genera in the interior. After two flips we get the fibration in the middle, and then slide the indefinite fold arc as indicated by the dashed arrow. On the right: 
The resulting fibration after the slip, where the four cusps are turned into Lefschetz critical points. The arrows indicate the direction of the fiberwise $2$-handle attachments.}
\label{Figure: Flip and slip}
\end{center}
\end{figure}

Given a closed oriented $4$-manifold $X$, and a self-intersection zero homology class $\alpha \in H_2(X)$, we can now prove that the invariants $g(X, \alpha)$, $S_g(X)$, and $g(X)$ are well-defined. The existence of an SBLF for a given pair $(X, \alpha)$ appears in \cite{W}, the proof of which we will outline here for completeness: 

One can represent $\alpha$ by an embedded orientable surface $F$ which has a trivial tubular neighborhood. The work of Gay-Kirby \cite{GK1} shows that there exists an achiral broken Lefschetz fibration with only connected fibers, having $F$ as one of its regular fibers, and satisfying the following properties: The fibration is injective on its critical set, the image of the round singularity is \textit{bi-directed}, that is it is composed of circles which are all parallel to the equator (which is composed of only regular values this time) contained in the interior of a big annular tubular neighborhood $B$ of the equator, where $B$ does not contain the images of any Lefschetz critical points, and such that the genera of the fibers only decrease when we trace the preimages over an oriented arc in $B$ connecting the equator to either one of the boundary components while intersecting the image of each round singular circle only once.

We can then eliminate the negative Lefschetz singularities using the handlebody argument in \cite{B3} or the singularity theory argument in \cite{L}. By pushing the Lefschetz singularities to the higher side of each round image using Lemma \ref{push}, we obtain a bi-directed broken Lefschetz fibration whose all Lefschetz critical points lie in a neighborhood of an annular neighborhood of the equator of $S^2$. As shown in \cite{W}, one can then employ the flip and slip move of \cite{B2} and the inverse merge move of \cite{L} so as to reduce the number of round circles to one. Then Lemma \ref{relativeminimality} above hands us a simplified broken Lefschetz fibration. Although these moves may (and typically will) alter the topology of fibers, they keep the homology class of the fiber intact. Hence, there always exists a simplified broken Lefschetz fibration on $X$ whose fiber is in the homology class $[F] = \alpha$. 

We can now define the broken genera invariants: Let $X$ be a closed smooth oriented $4$-manifold and $\alpha$ be a homology class in $H_2(X; \Z)$ with $\alpha^2 = m \geq 0$. Let $\tilde{X} \cong X\#m\CPb$ be the blow-up of $X$ at $m$ points and let $\tilde{\alpha} \in H_2(\tilde{X}; \Z)$ denote the homology class ${\iota}_*(\alpha)- \sum_i e_i$. Here $\iota$ is the inclusion of $X$ minus the blown-up points into $\tilde{X}$, and $e_i$ are the homology classes of the exceptional spheres. We define the \emph{broken genus} of $(X, \alpha)$ as the smallest non-negative integer $g$ among the genera of all possible SBLFs on $\tilde{X}$ whose fibers represent the class $\tilde{\alpha}$, and denote it by $g(X, \alpha)$. The \emph{broken genus series} of $X$, denoted by $S_g(X)$, is defined as the following formal sum in the integral group ring $\Z[H_2(X)]$ 
\[ 
S_g(X)  \, = \sum_{\alpha \in H_2(X)\, , \, \alpha^2=0} \, g(X, \alpha) \, t_{\alpha} \, + \sum_{\beta \in H_2(X)\, , \, \beta^2 >0} \, g(X, \beta) \, t_{\beta} \, ,
\]
and splits naturally as the sum of two power series on the right, which we will denote by $S^0_g(X)$ and $S^+_g(X)$. We then define the \emph{broken genus} of $X$, denoted by $g(X)$, as the smallest coefficient that appears in $S^0_g(X)$. 

Clearly $g(X, \alpha)$, $S_g(X)$ and $g(X)$ are all well-defined. Let $\psi: X' \to X$ be a diffeomorphism. If $f\colon\,X \to S^2$ is an SBLF with a regular higher side fiber $F$, then $F'=\psi^{-1}(F)$ is a regular higher side fiber of the SBLF $f'=f \circ \psi: X' \to X$ with $[F']= \psi^{-1}_*([F])$. This concludes our claim that broken genera invariants are smooth invariants. 

\begin{rk} \rm
The name \textit{simplified broken Lefschetz fibrations} went through some evolution since it was first introduced in \cite{B1} by the author, which is worth clarifying here: Without the assumption on connectedness or relative minimality of fibers, they were first shown to exist on blow-ups of all $4$-manifolds with $b^+>0$, namely on near-symplectic $4$-manifolds, which were the focus of \cite{B1}. The connectedness of fibers were later shown in \cite{B2}, again for near-symplectic broken Lefschetz fibrations, which follows from applying Lemma \ref{connected} in this setup. Williams worked out an algorithm for employing flip and slip and inverse merge moves as we reviewed above to get the existence of SBLFs (without the assumption on relative minimality) on arbitrary closed oriented $4$-manifolds \cite{W}. More recently in \cite{BK, H1,H2}, relative minimality assumption was added when studying these fibrations. 
\end{rk}

\begin{rk} \rm
The trivial homology class does not always give rise to an SBLF with the smallest broken genus, that is, $g(X, 0)$ is not necessarily equal to $g(X)$. For instance, any $S^2$-bundle over $S^2$ has a homologically essential fiber. So for $X=S^2 \x S^2$ or $\CP \# \CPb$, $g(X, 0) > g(X)=0$. 

\end{rk}

\begin{rk} \rm
As mentioned in the Introduction, the present work does \textit{not} deal with the possible connection between broken Lefschetz fibrations and the classical minimal genus problem for a given second homology class in a $4$-manifold $X$. To the author of the current article it seems most convenient to stay in the near-symplectic setting when addressing the minimal genus problem, and therefore pursuing this connection through a study of $S^+_g(X)$ above. We plan to take up this problem elsewhere. 
\end{rk}

\vspace{0.2in}
\section{Bounds for broken genera invariants} \label{Properties}

There are several bounds one can get on the broken genus based \emph{solely} on the topological type. We will note a few of them here.

\begin{lem}\label{euler}
If $X$ admits a genus $g$ SBLF with $k$ Lefschetz singularities, then \linebreak $\eu(X)=4-4g+k$ if round locus is empty, and $\eu(X)=6-4g+k$, otherwise. 
\end{lem}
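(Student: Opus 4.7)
The plan is to compute $\eu(X)$ by splitting $X$ according to the SBLF structure and summing the Euler characteristics of the pieces, invoking in each case the standard counting principle that adding a Lefschetz singular fiber raises the Euler characteristic of the total space by one above the product formula. Concretely, if $h\colon Y \to B$ is a Lefschetz fibration over a compact surface $B$ with generic fiber $\Sigma_g$, injective on the set of $k$ critical points, then removing the singular fibers yields a genuine $\Sigma_g$-bundle with Euler characteristic $(\eu(B)-k)(2-2g)$, while each nodal fiber has Euler characteristic $\eu(\Sigma_g)+1$; adding these contributions gives the baseline identity
\[
\eu(Y) \, = \, \eu(B)\eu(\Sigma_g) + k \, .
\]

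For the empty round locus case, the SBLF $f\colon X \to S^2$ is an honest genus $g$ Lefschetz fibration over $S^2$, and the identity above immediately yields $\eu(X) = 2(2-2g) + k = 4 - 4g + k$.

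For the non-empty round locus case, I would use the three-piece decomposition recalled in Section~\ref{Preliminaries}: write $X = H \cup R \cup L$, where $H$ is the higher side (a genus $g$ Lefschetz fibration over $D^2$ containing all the Lefschetz singularities), $L$ is the lower side (a trivial $\Sigma_{g-1}$-bundle over $D^2$), and $R$ is the round cobordism carrying the single round $2$-handle. The baseline formula gives $\eu(H) = 1\cdot(2-2g) + k = 2 - 2g + k$ and $\eu(L) = 1\cdot(2-2(g-1)) = 4 - 2g$. The round cobordism $R$ is obtained from its incoming boundary $\Sigma_{g-1}$-bundle over $S^1$ (which has vanishing Euler characteristic, since $\eu(S^1)=0$) by attaching a single round $2$-handle, diffeomorphic to $S^1 \times D^1 \times D^2$, hence itself of Euler characteristic zero; equivalently, in a handle decomposition compatible with the SBLF, a round $2$-handle splits into an ordinary $1$-handle and an ordinary $2$-handle, contributing $-1+1=0$ to the alternating count.

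Gluing along the common boundaries, which are surface bundles over $S^1$ and therefore of zero Euler characteristic, inclusion--exclusion collapses to a straight sum:
\[
\eu(X) \, = \, \eu(H) + \eu(R) + \eu(L) \, = \, (2 - 2g + k) + 0 + (4 - 2g) \, = \, 6 - 4g + k \, ,
\]
as desired. The only real care required is in the second case, where one must verify that the round cobordism and the two glueing boundaries each contribute zero; once this is in hand, both formulas follow in a line. I do not anticipate a substantive obstacle beyond bookkeeping.
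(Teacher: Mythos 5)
Your proof is correct and follows essentially the same route as the paper: decompose $X$ into the higher side, round cobordism, and lower side (or just the two pieces when the round locus is empty), use that the gluing boundaries and the round cobordism have vanishing Euler characteristic, and add up the contributions, with each Lefschetz critical point contributing $+1$. The extra justification you give for $\eu(R)=0$ via the splitting of the round $2$-handle into a $1$-handle and a $2$-handle is a detail the paper simply asserts, but it is consistent with the handlebody description in the references.
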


\begin{proof}
If the round locus is empty, then $X$ decomposes into $D^2 \x \Sigma_g$ and a genus $g$ Lefschetz fibration over $D^2$. The latter admits a handlebody description with $k$ $2$-handles attached to $D^2 \x \Sigma_g$, so $\eu(X)= 2 (2-2g) +k = 4 -4g+k$.

If the round locus is non-empty, then $X$ decomposes into three pieces instead, $D^2 \x \Sigma_{g-1}$, a round  cobordism, and a genus $g$ Lefschetz fibration over $D^2$. Since the round cobordism has euler characteristic zero, we get 
\[ \eu(X)= 2-2(g-1) + 2-2g +k = 6-4g +k \, . \]
\end{proof}

\begin{lem}\label{pi1}
Let $f: X \to S^2$ be a genus $g$ SBLF with $k$ Lefschetz critical points. Then $2g \geq b_1(X) \geq 2g-k-2$ and $2g \geq b_1(X) \geq 2g - k -1$ if $f$ admits a section. If the round locus is empty, then the bounds on the right improve to $b_1(X) \geq 2g -k -1$ and $2g-k$, respectively.
\end{lem}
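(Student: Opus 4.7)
My plan is to analyze $H_1(X)$ using a handle decomposition of $X$ adapted to the SBLF, built from the higher side down. The higher side $H_+$---a genus $g$ Lefschetz fibration over $D^2$---is obtained from $D^2 \times \Sigma_g$ (one $0$-handle, $2g$ $1$-handles, one $2$-handle) by attaching $k$ Lefschetz $2$-handles along the vanishing cycles. If the round locus is non-empty, we then attach the dual of the round handle (one $2$-handle together with one $3$-handle, since a round $2$-handle canonically splits as a $1$-handle plus a $2$-handle) and cap off with $D^2 \times \Sigma_{g-1}$ turned upside down, contributing one more $2$-handle, $2g-2$ $3$-handles, and one $4$-handle. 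If the round locus is empty, we simply cap off with $\Sigma_g \times D^2$ upside down (one $2$-handle, $2g$ $3$-handles, one $4$-handle). In both cases there are $2g$ $1$-handles, so $b_1(X) \leq 2g$ is immediate; alternatively, the inclusion of a regular higher side fiber $\Sigma_g \hookrightarrow X$ surjects onto $\pi_1(X)$.

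For the lower bound I would write $H_1(X) = \Z^{2g}/N$, where $N$ is the subgroup generated by the classes of the $2$-handle attaching circles. The total number of $2$-handles is $k+2$ in the empty round locus case and $k+3$ in the non-empty case. Among them, the $2$-handle that comes from the $2$-cell of $\Sigma_g$ inside $D^2 \times \Sigma_g$ has attaching circle the surface relation $\prod_{i=1}^g [a_i, b_i]$, which is automatically trivial in $H_1$. Since each of the remaining $2$-handles contributes rank at most one to $N$, one obtains $b_1(X) \geq 2g - k - 1$ when the round locus is empty and $b_1(X) \geq 2g - k - 2$ when it is non-empty.

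Suppose now $f$ admits a section $\sigma \colon S^2 \to X$. The key step is to show that one further $2$-handle relation is redundant, namely the one coming from the lower cap ($\Sigma_g \times D^2$ or $\Sigma_{g-1} \times D^2$). Its attaching circle is, by the standard model, a section $\{p\}\times S^1 \subset \Sigma_h \times S^1$ of the boundary bundle $\partial H_- \to S^1$. The restriction of $\sigma$ to the base disk $D \subset S^2$ complementary to that of $H_-$ provides a disk in $X \setminus H_-^{\circ}$ whose boundary, after isotopy in $\partial H_-$, coincides with this attaching circle. Hence this relation is a consequence of the earlier ones, and the lower bounds improve by one to $b_1(X) \geq 2g - k$ and $b_1(X) \geq 2g - k - 1$ in the two cases. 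I expect the main subtle point to be precisely this identification of the capping $2$-handle's attaching circle with the boundary of the section disk (up to isotopy in $\partial H_-$); once it is in place, the rest is routine handle-count bookkeeping.
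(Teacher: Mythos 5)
Your proof is correct and takes essentially the same route as the paper: both use the handle decomposition of an SBLF from \cite{B1}, with $2g$ one-handles and $k+3$ (resp.\ $k+2$) two-handles, of which the fiber two-handle gives a homologically trivial relation and, in the presence of a section, the two-handle pulled back from the lower side gives a redundant one. The only cosmetic difference is that the paper phrases the section case as that last $2$-handle being algebraically unlinked from the $1$-handles, while you derive the redundancy from the section disk capping its attaching circle; both yield the same count.
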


\begin{proof}
As shown in \cite{B1}, a genus $g$ SBLF gives rise to a handlebody decomposition of $X$ with $2g$ $1$-handles and $k+3$ \, $2$-handles, where the rest of the handles are of higher index. Here, $k$ of these $2$-handles are Lefschetz $2$-handles, one is for the fiber of the higher side, one comes from the round $2$-handle, and one last $2$-handle is pulled back from the lower side. The $2$-handle for the fiber goes over each $1$-handle algebraically zero times. If there is a section, then the last $2$-handle can be seen to be unlinked with all the $1$-handles. If we have an honest Lefschetz fibration instead, then we get a similar handlebody decomposition with $2g$ $1$-handles, and $k+2$ \, $2$-handles, without the $2$-handle of the round $2$-handle. Using these handlebody decompositions to calculate the first homology, we get the desired inequalities. 
\end{proof}

\begin{rk} \rm
Using similar arguments as in the proof of above lemma, it is easy to get constraints on the broken genus relying solely on $\pi_1(X)$, which however depends on the group of course. Such bounds would be handy when addressing the Problem~\ref{groupgenera} we rise at the end of our paper.
\end{rk}

We see that any non-negative integer value can be attained as the broken genus of some $4$-manifold:

\begin{prop} \label{unbounded}
$g(S^2 \x \Sigma_g)=g$ for any non-negative integer $g$. 
\end{prop}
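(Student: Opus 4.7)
The plan is to establish both inequalities $g(S^2 \x \Sigma_g) \leq g$ and $g(S^2 \x \Sigma_g) \geq g$ separately.

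For the upper bound I would exhibit an explicit SBLF: the trivial product projection $\pi\colon S^2 \x \Sigma_g \to S^2$ is a genus $g$ surface bundle, with empty Lefschetz critical set and empty round locus, and is vacuously relatively minimal. It thus qualifies as an SBLF of genus $g$. Its fiber class $\alpha = [\{pt\} \x \Sigma_g]$ has trivial normal bundle, so $\alpha^2 = 0$; in particular no blow-up intervenes in the definition of $g(X, \alpha)$. Hence $g(S^2 \x \Sigma_g, \alpha) \leq g$, and therefore $g(S^2 \x \Sigma_g) \leq g$ since the broken genus is the minimum of $g(X, \beta)$ over all classes $\beta$ with $\beta^2 = 0$.

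For the lower bound I would apply Lemma \ref{euler} together with $\eu(S^2 \x \Sigma_g) = 2(2-2g) = 4-4g$. Let $\beta \in H_2(X)$ with $\beta^2 = 0$ (so again $\tilde{X} = X$) and suppose $f\colon X \to S^2$ is a genus $g'$ SBLF with $k$ Lefschetz critical points whose fibers realize $\beta$. If the round locus of $f$ is empty, Lemma \ref{euler} gives $4 - 4g = 4 - 4g' + k$, so $k = 4(g' - g) \geq 0$ and therefore $g' \geq g$. If instead the round locus is nonempty, the same lemma gives $4 - 4g = 6 - 4g' + k$, so $k = 4g' - 4g - 2 \geq 0$, forcing $g' \geq g + 1$. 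Either way $g' \geq g$, so $g(X, \beta) \geq g$ for every $\beta$ with $\beta^2 = 0$, and thus $g(S^2 \x \Sigma_g) \geq g$. Combining the two bounds gives the claimed equality.

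There is no serious obstacle here: the upper bound is a tautology once the product fibration is written down, and the lower bound reduces to the Euler characteristic bookkeeping built into Lemma \ref{euler}. The one point worth flagging is that working only with classes of self-intersection zero avoids any blow-up in the definition of $g(X, \beta)$, which is what allows the Euler characteristic identity to be applied directly to $X$ itself.
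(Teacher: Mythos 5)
Your proof is correct and follows essentially the same route as the paper: the trivial projection gives the upper bound, and Lemma~\ref{euler} combined with $\eu(S^2 \x \Sigma_g)=4-4g$ gives the lower bound (the paper merely compresses your two cases into the single inequality $\eu(X) \geq 4-4h+k$). Your explicit remark that self-intersection zero classes require no blow-up is a fair point the paper leaves implicit.
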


\begin{proof}
The manifold $X=S^2 \x \Sigma_g$ admits the trivial SBLF of genus $g$, which is the genus $g$ surface bundle obtained by projection onto the first factor. If $X$ admits an SBLF of genus $h$, then by Lemma \ref{euler} we get $4-4g = \eu(X) \geq 4-4h +k$, so $h\geq g$, completing the proof. 
\end{proof}

This result can be generalized drastically, if we focus on getting a lower bound for the broken genus of a \emph{pair} $(X, \alpha)$. This follows from the symplectic Thom conjecture, proved by Morgan, Szab\'o and Taubes \cite{MST}, and thus provides a lower bound based on the \emph{smooth} topology:

\begin{prop} \label{Thom}
If $X$ admits a symplectic form for which there is an embedded symplectic surface $F$ in $X$ representing $\alpha \in H_2(X)$, then $g(X, \alpha) = g(F)$. In particular, if $(X, \alpha)$ admits a Lefschetz fibration on $X$ with a homologically essential fiber $F$ (in particular whenever $g(F) \neq 1$ or the critical locus is non-empty), then $g(X, \alpha)$ = $g(F)$. 
\end{prop}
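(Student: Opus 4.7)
My plan is to prove the two inequalities $g(X,\alpha) \leq g(F)$ and $g(X,\alpha) \geq g(F)$ separately.

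For the lower bound, let $m = \alpha^2 \geq 0$ and blow up $m$ points on $F$ symplectically; in $\tilde X \cong X\#m\CPb$, with the resulting blown-up symplectic form $\tilde\omega$, the proper transform $\tilde F$ is a symplectic surface of self-intersection zero, genus $g(F)$, and homology class $\tilde\alpha$. Suppose $f\colon \tilde X \to S^2$ is an SBLF of genus $h$ with fibers in the class $\tilde\alpha$. A regular higher-side fiber is a smoothly embedded orientable surface of genus $h$ representing $\tilde\alpha$; when the round locus is non-empty, a regular lower-side fiber is such a surface of genus $h-1$, also representing $\tilde\alpha$. Applying the symplectic Thom conjecture of Morgan, Szab\'o, and Taubes to each forces $h \geq g(F)$, and in fact $h \geq g(F)+1$ when the round locus is non-empty. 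This proves $g(X,\alpha) \geq g(F)$, and moreover shows that any SBLF attaining this bound must have empty round locus, i.e.\ be an honest Lefschetz fibration.

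For the upper bound in the ``in particular'' Lefschetz-fibration case, note that $\alpha^2 = 0$ automatically for a fiber class, so no blow-up is needed. A Lefschetz fibration on $X$ with homologically essential fiber $F$ in class $\alpha$ becomes relatively minimal after blowing down any exceptional sphere fiber components, a process that preserves $g(F)$, and a relatively minimal Lefschetz fibration is itself an SBLF of genus $g(F)$ with empty round locus. This yields $g(X,\alpha) \leq g(F)$. Under the hypotheses on $F$ and the critical locus (so that Gompf's theorem applies), the Lefschetz fibration carries a compatible symplectic form making $F$ symplectic, thereby exhibiting this case as a special instance of the general symplectic case and reducing the ``in particular'' statement to the first sentence of the proposition.

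For the upper bound in the pure symplectic case, the lower bound already forces any optimal SBLF on $\tilde X$ to be an honest genus-$g(F)$ Lefschetz fibration with $\tilde F$ realized, up to isotopy, as a regular fiber. The plan is to produce such a fibration by decomposing $\tilde X = \nu(\tilde F) \cup W$ along the standard symplectic tubular neighborhood $\nu(\tilde F) \cong D^2 \times \Sigma_{g(F)}$, fibering $\nu(\tilde F)$ trivially, and fitting a matching Lefschetz-fibration structure on the symplectic complement $W$ (with boundary $S^1 \times \Sigma_{g(F)}$) in the spirit of the near-symplectic Lefschetz fibration constructions of Auroux--Donaldson--Katzarkov. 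The delicate step, and the place I expect the main technical obstacle to lie, is arranging this matching along the common boundary so that no round handle is required; combined with the lower bound, this will deliver $g(X,\alpha) = g(F)$.
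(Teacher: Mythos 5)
Your lower bound and your treatment of the Lefschetz-fibration case track the paper's proof: the paper likewise derives the lower bound from the Morgan--Szab\'o--Taubes theorem (citing Li--Liu for $b^+=1$) and handles the second sentence by invoking the Thurston--Gompf construction to make the fiber symplectic, with the given fibration itself supplying the matching SBLF. Your version is in places more careful than the paper's: you spell out the symplectic blow-up needed to reduce to a square-zero class, and your observation that a lower-side fiber is homologous to the higher-side fiber (so that a non-empty round locus forces $h \geq g(F)+1$) is a genuine refinement not in the paper. The one wrinkle in this part is the blow-down step: blowing down an exceptional fiber component changes the ambient manifold, so a non-relatively-minimal Lefschetz fibration on $X$ yields an SBLF on a blow-down of $X$, not on $X$ itself; this needs a separate fix (compare the second half of Lemma~\ref{relativeminimality}, which uses the round $2$-handle and hence does not apply verbatim when the round locus is empty).

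The genuine gap is your final paragraph. The construction you sketch there --- a genus $g(F)$ honest Lefschetz fibration on $\tilde X$ with $\tilde F$ as a fiber, built from $\nu(\tilde F)\cup W$ --- cannot be carried out in general, because the equality $g(X,\alpha)=g(F)$ fails in the generality of the first sentence. Take $X=S^2\times\Sigma_g$ with a product symplectic form and $F=S^2\times\{pt\}$: then $F$ is a square-zero symplectic sphere, so $g(F)=0$, yet $g(X,[F])\geq g(X)=g$ by Proposition~\ref{unbounded} (equivalently by Lemma~\ref{pi1}, since $2h\geq b_1(X)=2g$ for any genus $h$ SBLF on $X$); the paper itself uses exactly this pair in Section~\ref{BrokenVersusMinimal}(i). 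Consistently with this, the paper's own proof of the first sentence establishes only the inequality $g(X,\alpha)\geq g(F)$, which is all that is ever used: the surrounding text advertises the proposition as a ``lower bound,'' and in Theorem~\ref{knotsurgery} the matching upper bound is supplied by an explicitly constructed Lefschetz fibration. So you were right to flag that step as the main obstacle, but it is not a technical matter of arranging the gluing along the boundary --- it is an impossibility. The correct resolution is to read the first sentence as the inequality $g(X,\alpha)\geq g(F)$ and to reserve the equality for the case where a Lefschetz fibration with fiber in the class $\alpha$ is actually given, which is exactly what your first two paragraphs already prove.
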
 

\begin{proof}
The first statement follows from the main theorem of Morgan, Szab\'o and Taubes in \cite{MST}, who proved that a symplectic surface $F$ in a symplectic $4$-manifold $X$ has the smallest genus among all surfaces that are in the same homology class. (The case $b^+(X)=1$ was shown earlier by Li and Liu \cite{LL}.) When we have a Lefschetz fibration with a homologically essential fiber, the Thurston-Gompf construction gives a symplectic form on $X$ making any fiber $F$ symplectic. 
\end{proof}

We now turn to getting some upper bounds on the broken genus of a $4$-manifold by looking at some topological operations which would allow us to get new SBLFs from old. Naturally, many of these modifications will be through indefinite generic maps. So we are going to express many of our intermediate results in terms of \linebreak SPWFs, which has the additional advantage of translating many of our later results to the context of SPWFs with ease. 

\begin{figure}[ht] 
\begin{center}
\includegraphics[scale=0.6]{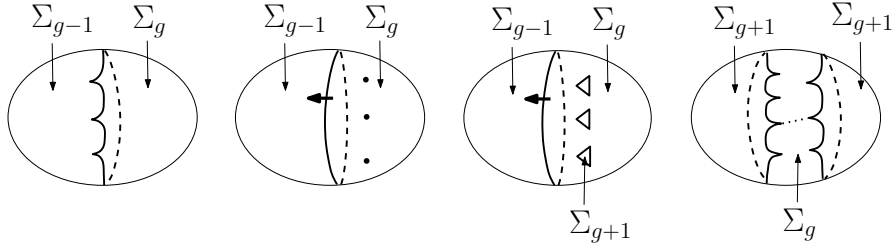}
\caption{\small The first two figures: The base diagram of a genus $g$ SPWF drawn with three cusps, and a genus $g$ SBLF obtained from it after ``unsinking'' the cusps and trading them with Lefschetz critical points. The second, third, and fourth figures: A genus $g$ SBLF drawn with three Lefschetz critical points turned into a genus $g+1$ SPWF. After perturbing all the Lefschetz critical points into indefinite singular circles with three cusps on each, we get the second figure from the right. After inverse merging all these singular circles that come from the Lefschetz critical points into one, and applying a flip and slip to the far left disk region without eliminating the cusps, we obtain the figure on the far right. We can then inverse merge the two indefinite circles with cusps to pass to a genus $g+1$ SPWF with five cusps.}
\label{Figure: SBLFvsSPWF}
\end{center}
\end{figure}

\begin{lem} \label{SBLFvsSPWF}
If $(X, \alpha)$ admits a genus $g$ SPWF, then it admits a genus $g$ SBLF. If it admits a genus $g$ SBLF, then it admits a genus $g+1$ SPWF. 
\end{lem}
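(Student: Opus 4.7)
The plan is to establish each direction separately, exploiting only local and semi-local moves between indefinite generic maps whose base diagrams are illustrated in Figure \ref{Figure: SBLFvsSPWF}, and to keep track of two invariants: the homology class of the regular fiber (which is preserved since all moves are supported in a neighborhood of the singular locus), and the genus of the higher-side fiber.

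For the first direction, I would start from a genus $g$ SPWF $f\colon X \to S^2$ and apply the \emph{unsinking} move to each cusp on the (unique) cuspidal indefinite-fold singular circle. This local move is the standard inverse wrinkling operation that trades a cusp on an indefinite fold arc for a Lefschetz critical point on a nearby higher-side fiber, while smoothing the fold across the former cusp. Applying it to every cusp of $f$ leaves a smooth embedded circle of indefinite folds as the round locus together with Lefschetz critical points on the higher side, and the higher-side fiber genus remains $g$. The connectedness of the fibers and the embedded round image are inherited from the SPWF, and Lemma \ref{relativeminimality} can be invoked if needed to ensure relative minimality. The output is then a genus $g$ SBLF with a fiber in the same homology class.

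For the second direction, I would start from a genus $g$ SBLF, which by Lemma \ref{push} may be assumed to have all Lefschetz critical points on the higher side (and with non-empty round locus, using Lemma \ref{connected} to introduce a circle of indefinite folds if it was originally empty). First apply Lekili's wrinkling move to each Lefschetz critical point, replacing it with a small cuspidal circle carrying three cusps, and then inverse-merge each of these small circles with the main round singular circle of the SBLF. The result is a fibration whose singular locus is a single connected cuspidal indefinite-fold circle with embedded image, but whose higher-side fiber still has genus $g$. To raise the higher-side genus to $g+1$, I would perform a flip-and-slip on the lower-side disk, carried out in a neighborhood disjoint from the existing cusps so as not to eliminate them, as depicted in the fourth sub-figure of Figure \ref{Figure: SBLFvsSPWF}. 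This produces a second cuspidal singular circle in the former lower-side region over whose interior the fibers now have the higher genus $g+1$. A final inverse merge between the two cuspidal circles yields a single connected cuspidal singular circle, delivering the required genus $g+1$ SPWF (with five cusps in the concrete diagram).

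The main technical obstacle is the bookkeeping at the end of the second direction: one must simultaneously verify, after the chain of wrinklings, inverse merges, and the flip-and-slip, that the singular image is still connected and embedded, all fibers remain connected, and most importantly that the higher-side fiber genus increments by exactly one. The genus bookkeeping can be cross-checked either directly from the base diagram in Figure \ref{Figure: SBLFvsSPWF} (tracking the higher-genus region created by the flip-and-slip) or numerically from Lemma \ref{euler} by comparing $\mathrm{e}(X)$ computed from the original SBLF with the one computed from the resulting SPWF, and the connectedness/embeddedness conditions are local properties of each move that can be verified in the standard models of wrinkling, inverse merging, and flip-and-slip worked out in \cite{B2, L, W}.
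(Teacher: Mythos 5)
Your first direction is exactly the paper's (unsink every cusp into a higher-side Lefschetz critical point), and your toolkit for the second direction --- wrinkling each Lefschetz point into a three-cusped circle, inverse merges, a flip-and-slip performed without eliminating the resulting cusps, and a concluding inverse merge --- is also the paper's. The gap is in the \emph{order} of your inverse merges. You propose to inverse-merge each wrinkled circle directly into the main round circle of the SBLF. But the inverse merge is not an unconditional move: the two fold arcs being joined must have vanishing cycles meeting transversally at a single point in a common reference fiber. That condition is automatic when the merging arc runs between cusp points (the two fold vanishing cycles adjacent to a cusp intersect once), which is why the paper only ever merges \emph{cuspidal} circles along arcs between cusps --- compare the proof of Theorem~\ref{SPWFconnected}, and note that in the proof of Theorem~\ref{constructions} the author must explicitly verify the intersection-one condition before merging two cusp-free round circles. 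For your step there is no such guarantee: a Lefschetz vanishing cycle, and hence each of the three fold vanishing cycles of its wrinkled circle, may well be disjoint from the vanishing cycle of the round $2$-handle, in which case the merge cannot be performed. The paper sidesteps this by inverse-merging the wrinkled circles only with one another, along cusps, so that the resulting single cuspidal circle lies entirely on the higher side, disjoint from the round circle.

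This ordering problem then propagates into an internal inconsistency. Once you have collapsed the singular locus to a single circle, the flip-and-slip applied to the lower-side disk acts on that one circle: as in Figure~\ref{Figure: Flip and slip}, it replaces the fold circle bounding the low-genus disk by a single four-cusped circle whose interior genus has gone up by two; it does not leave the old circle in place and spawn a second, disjoint cuspidal circle inside the former lower side. So after your step three there is only one singular circle and nothing for your final inverse merge to act on. In the paper's version the flip-and-slip is applied to the still cusp-free round circle while the merged cuspidal circle sits untouched on the higher side; only then are there genuinely two cuspidal circles with cusps facing each other, and the final inverse merge along an arc between a cusp on each yields the genus $g+1$ SPWF.
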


\begin{proof}
The first claim follows from eliminating each cusp singularity by introducing a Lefschetz critical point on the higher genus side using the unsinking move \cite{L, W}. (See Figure~\ref{Figure: SBLFvsSPWF}.) Then the additional constraints met by the SPWF translates to those required for an SBLF. The second claim is proved as in the previous section based on an algorithm of Williams': We first perturb each Lefschetz singularity locally to an indefinite singular circle with three cusps. We can then inverse merge all these to get only one circle with many cusps, lying entirely on the higher side of the original fibration. Next we apply the flip and slip move without turning the resulting cusps into Lefschetz singularities. Thus we have an indefinite generic map on $X$ whose base diagram is as follows: On the far left and far right it has fibers of genera $g+1$, and in the middle it has fibers of genera $g$, where the two circles with cusps have cusps facing each other (Figure~\ref{Figure: SBLFvsSPWF}). Merging these two circles into one results in a genus $g+1$ SPWF. We finally note that $\alpha$, the homology of the fiber class, does not change during these homotopy moves.
\end{proof}

It immediately follows that:

\begin{prop} \label{orientation}
If $g(X, \alpha)= g$ for $\alpha^2=0$, then $g-1 \leq g(\bar{X}, \alpha) \leq g+1 $.
\end{prop}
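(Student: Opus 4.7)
The plan is to exploit the following key observation: unlike Lefschetz singularities, whose complex local model $(u,v)\mapsto u^2+v^2$ carries a preferred orientation, the indefinite fold and cusp singularities defining an SPWF are given by purely real local models that make sense in any smooth chart regardless of the orientation of the domain. Hence an SPWF on $X$ is automatically an SPWF on $\bar{X}$ of the same genus and with the same fiber class, and Lemma~\ref{SBLFvsSPWF} provides the bridge to translate between SBLFs and SPWFs at a cost of at most one in genus.

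For the upper bound $g(\bar{X},\alpha)\le g+1$, I would start with a genus $g$ SBLF on $X$ with fiber class $\alpha$ (no blow-ups needed, since $\alpha^2=0$). Applying the second direction of Lemma~\ref{SBLFvsSPWF} produces a genus $g+1$ SPWF $f'\colon X\to S^2$ with fiber class $\alpha$, the final sentence of that lemma's proof confirming that the fiber homology class is preserved through the intermediate homotopy moves. The same map $f'$ now defines a genus $g+1$ SPWF on $\bar{X}$: the base is still $S^2$, the singular image is still a connected embedded $1$-manifold with a non-empty set of cusps, all fibers are still connected, and the local models of the fold and cusp singularities are unaltered by reversing the orientation of $X$. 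Since $\alpha^2=0$ in $\bar{X}$ as well (the intersection form reverses sign but $-0=0$), applying the first direction of Lemma~\ref{SBLFvsSPWF} yields a genus $g+1$ SBLF on $\bar{X}$ with fiber class $\alpha$, so that $g(\bar{X},\alpha)\le g+1$.

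The lower bound $g(\bar{X},\alpha)\ge g-1$ follows by symmetry: applying the upper-bound argument to $(\bar{X},\alpha)$ and using $\overline{\bar{X}}=X$ gives $g(X,\alpha)\le g(\bar{X},\alpha)+1$, i.e., $g\le g(\bar{X},\alpha)+1$, which rearranges to the claimed inequality. The main conceptual point is the first paragraph's observation that SPWFs are genuinely orientation-insensitive; once this is granted, the proposition is essentially a corollary of Lemma~\ref{SBLFvsSPWF}, and the $\pm 1$ width of the interval reflects the genus shift inherent in each direction of that lemma.
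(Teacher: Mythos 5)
Your proof is correct and follows essentially the same route as the paper: the orientation-insensitivity of the fold and cusp local models, combined with the two directions of Lemma~\ref{SBLFvsSPWF}, gives the upper bound, and the lower bound follows by applying the same argument with the roles of $X$ and $\bar{X}$ exchanged (the paper phrases this last step as a contradiction, but it is the identical symmetry observation).
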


\begin{proof}
Observe that the local model for cusp and fold singularities is \emph{not} oriented, so any genus $g$ SPWF on $X$ gives rise to a genus $g$ SPWF on $\bar{X}$ where the fibers are in the same homology class. From Lemma \ref{SBLFvsSPWF}, $g(X, \alpha)= g$ implies that $(X, \alpha)$ admits a genus $g+1$ SPWF, and thus so does $(\bar{X}, \alpha)$. In turn, we get an SBLF for $(\bar{X}, \alpha)$, implying that $g(\bar{X}, \alpha) \leq g+1$. Using the same trick, we see that $g(\bar{X}, \alpha) \leq g-2$ would lead to a contradiction to our assumption that $g(X, \alpha) = g$.  
\end{proof}

\begin{figure}[ht] 
\begin{center}
\includegraphics[scale=0.6]{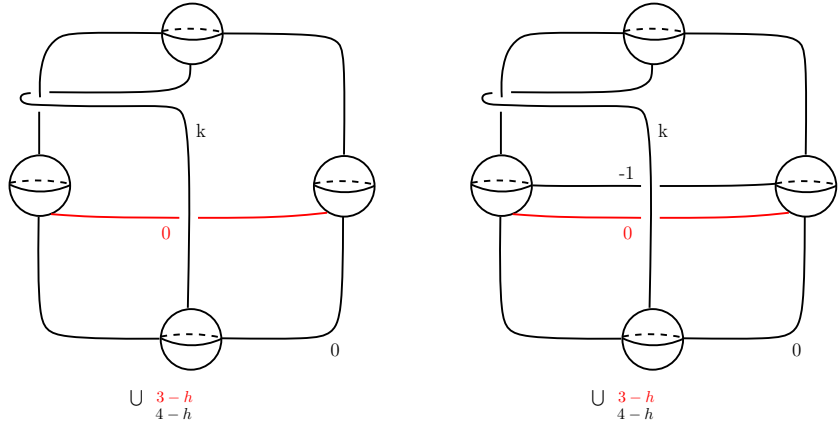}
\caption{\small On the left: The genus one SBLF on $S^4$. The twisted gluing of the round cobordism to the higher side along $T^3$ causes the $2$-handle from the lower side to be pulled back to the higher side so that it links with the $2$-handle of the higher genus fiber once. Its framing $k$, determined by the twisting, does not change the underlying topology. On the right: The genus one SBLF on $\CPb$, as an example of a blow-up of an SBLF with non-empty round locus.}
\label{trivialgenusoneSBLFs}
\end{center}
\end{figure}

\begin{rk} \label{ruled} \rm
The $\pm 1$ discrepancy between the broken genera of $(X, \alpha)$ and $(\bar{X}, \alpha)$ cannot be removed; there are many pairs $(X, \alpha)$ with $g(\bar{X}, \alpha)= g(X, \alpha)  + m$, for each $m = -1, 0, 1$. For instance, consider the genus one SBLFs on $S^4$ and $\CPb$ in Figure~\ref{trivialgenusoneSBLFs}. Since neither one of these manifolds is ruled, they can only admit SBLFs of positive genera and with fibers representing the trivial homology class. Clearly $(\bar{S^4},0)$ can be supported by the same SBLF, which realizes the smallest broken genus. On the other hand, from Lemma \ref{euler} we see that a genus one SBLF on $\CP$ would have a single Lefschetz singularity. It is easy to see that the only essential embedded curve that the monodromy of the higher side prescribed by a single Dehn twist along a non-separating curve $a$ on $T^2$ can fix (in order to hand us a round $2$-handle cobordism) is parallel to $a$. This gives an embedded sphere of self-intersection $-1$, which is impossible for $\CP$.
\end{rk}

We can now generalize the argument we gave in the proof of Lemma~\ref{relativeminimality} to obtain:

\begin{prop} \label{achiralstab}
Let $(X, \alpha)$ admit a genus $g$ SBLF with non-empty round locus (resp. with non-empty round locus and with at least one Lefschetz critical point) and let \, $M$ be $\# a \CP \# b \CPb $ for any $a, b \geq 0$ (resp. $\# r (S^2 \x S^2)$ for any $r \geq 0$). \linebreak Then $(X \# M, \tilde{\alpha})$ where the class $\tilde{\alpha}$ represents the image of $\alpha$ under the homomorphism induced by the inclusion map $X \setminus D^4 \hookrightarrow X\# M$, admits a genus $g+1$ SBLF. 
\end{prop}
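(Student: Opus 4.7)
My strategy is to generalize the handle-slide argument of Lemma \ref{relativeminimality} by exploiting the extra flexibility gained from raising the fiber genus by one. First, I would apply Lemma \ref{SBLFvsSPWF} twice (SBLF to SPWF to SBLF) to upgrade the given genus $g$ SBLF on $X$ to a genus $g+1$ SBLF with fiber still in the class $\alpha$. The upgraded fibration retains a non-empty connected round locus, and by virtue of the unsinking step in Lemma \ref{SBLFvsSPWF}, it automatically comes equipped with additional Lefschetz critical points whose vanishing cycles can be arranged to lie in the ``new'' handle of $\Sigma_{g+1}$.

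Starting from the explicit handle diagram of this genus $g+1$ SBLF (as presented in \cite{B1}), I would incorporate each summand of $M$ by adding the appropriate framed unknotted $2$-handle(s) and then performing slides. For each $\CPb$: add a $(-1)$-framed unknot and, exactly as in Lemma \ref{relativeminimality}, slide it over the $0$-framed $2$-handle of the round $2$-handle so that it becomes a Lefschetz handle attached along a parallel of the round handle's attaching curve. For each $\CP$: the dual trick works, namely add a $(+1)$-framed unknot placed as a meridian of the round handle's attaching curve with orientation giving linking number $-1$; sliding over the $0$-framed $2$-handle of the round handle then produces a handle of framing $1+0+2(-1)=-1$ along a parallel of the same curve, i.e.\ a Lefschetz-type handle. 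For each $S^2 \x S^2$ summand, one instead uses a Lefschetz critical point of the original fibration, which is why the stronger hypothesis is needed: $S^2 \x S^2$ contributes a Hopf-linked pair of $0$-framed unknots, and by sliding each over an existing $(-1)$-framed Lefschetz handle with appropriate linking numbers, one produces a Lefschetz/antiLefschetz pair sharing a common vanishing cycle, after which the antiLefschetz singularity is eliminated via the move of \cite{B3, L}.

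The main obstacle, and the reason the genus must rise from $g$ to $g+1$, is the global bookkeeping for the higher-side monodromy: each slide above introduces new Dehn twists (several of them potentially along the round handle's attaching curve itself), and the total product of these Dehn twists must still restrict, on the round cobordism boundary, to the single Dehn twist prescribed by the round handle. The algebraic freedom furnished by the extra Lefschetz critical points from the genus-increase step, whose vanishing cycles lie in the newly added handle of $\Sigma_{g+1}$, is precisely what lets these cancellations be carried out inside the mapping class group of $\Sigma_{g+1}$; at genus $g$ there is simply no room to absorb the extra twists while keeping the fibration simplified. One then verifies that relative minimality and the connectedness and embeddedness of the round image are preserved by the controlled slides, concluding that $(X \# M, \tilde{\alpha})$ admits a genus $g+1$ SBLF as claimed.
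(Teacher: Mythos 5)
Your handling of the $\CPb$ summands and the $S^2 \x S^2$ pairs is broadly in line with the paper, but the step for the $\CP$ summands contains a genuine error, and it sits exactly where the difficulty of the proposition lies. A $(+1)$-framed unknot represents a $\CP$ connected summand only if it is \emph{unlinked} from the rest of the diagram; if you instead place it as a meridian of the round handle's $0$-framed attaching circle with linking number $-1$, blowing it down changes the framing of that circle to $-1$, and the resulting manifold is no longer $X \# \CP$. With the unknot correctly unlinked, the slide over the $0$-framed round $2$-handle gives framing $1+0+0=+1$, i.e.\ a \emph{negative} (achiral) Lefschetz handle. So there is no ``dual trick'': the $\CP$ summands unavoidably produce negative Lefschetz critical points, and the content of the proposition is how to remove them. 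The paper's proof accepts this: it first builds a genus $g$ \emph{achiral} SBLF on $X\# M$ (with $a$ negative and $b$ positive new critical points), and only then raises the genus by running the proof of Lemma~\ref{SBLFvsSPWF} --- wrinkle every critical point of either sign into a cusped indefinite circle, inverse-merge, flip-and-slip --- to land on a genus $g+1$ SPWF and hence a genus $g+1$ SBLF. The genus increase is the price of eliminating achirality, not a device for absorbing Dehn twists.

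Relatedly, your closing argument --- that the extra Lefschetz critical points created by first passing to a genus $g+1$ SBLF on $X$ supply ``algebraic freedom'' in the mapping class group to cancel unwanted twists --- does not correspond to any of the available moves (flip-and-slip, inverse merge, unsinking, wrinkling) and is not substantiated. In particular, the elimination of a negative Lefschetz critical point via \cite{B3, L} introduces a \emph{new} round circle, so invoking it casually in the $S^2\x S^2$ case destroys the connectedness of the round locus unless you then re-run the merging machinery, i.e.\ unless you do what the paper does. Performing the genus increase first, as you propose, also leaves the negative critical points still to be removed afterwards, which would push the genus to $g+2$. Reordering the steps --- connect-sum first, accept achirality, then apply the conversion of Lemma~\ref{SBLFvsSPWF} once --- repairs the argument and recovers the paper's proof.
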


\begin{proof}
Let $(X,f)$ be a genus $g$ SBLF on $(X, \alpha)$ with non-empty round locus. \linebreak If $M= \# a \CP \# b \CPb$, we can add $a$ many $(+1)$-framed and $b$ many  $(-1)$-framed unknotted $2$-handles to the standard handle diagram of $(X,f)$ and slide all of them over the $0$-framed $2$-handle of the round $2$-handle as in the proof of Lemma~\ref{relativeminimality}, so that their vanishing cycles lie on the fiber with $(+1)$- and $(-1)$-framings, respectively. This describes an \emph{achiral} SBLF on $X \# M$, with $a$ negative and $b$ positive new Lefschetz singularities introduced to the original SBLF $(X,f)$. We can then follow the proof of Lemma~\ref{SBLFvsSPWF} first to turn this genus $g$ achiral SBLF into a genus $g+1$ SPWF, and then the latter into a genus $g+1$ SBLF, supporting $(X\# M, \tilde{\alpha})$ as in the statement. 

When $M= \# r (S^2 \x S^2)$, this time we add $r$ pairs of $0$-framed unknots to the standard diagram of $(X,f)$, where each pair links once. We can then slide these pairs of $2$-handles over a Lefschetz $2$-handle so that each becomes a pair of \textit{unlinked} positive and negative Lefschetz handles. (This modification is nothing but the one observed in \cite{GS}, Example 8.4.6, now performed in the BLF setting.) Once again, this yields a genus $g+1$ SBLF supporting $(X\# M, \tilde{\alpha})$. 
\end{proof}

We are going to conclude this section by showing that there is an additive upper bound on connected sums, which will serve as a strong calculational tool.

\begin{thm} \label{SPWFconnected}
Let $f_i\colon\,  X_i \to S^2$ be a genus $g_i$ SPWF with a section $S_i$, for $i=1,2$. Then there exists an SPWF (resp. SBLF) $f\colon\, X=X_1 \# X_2 \to S^2$ of genus $g_1 + g_2$ with a section $S$. If $\alpha_i$ is the homology class of the fiber of the SPWF (resp. SBLF) $f_i$, then the homology class of the fiber of $f$ is  $(\iota_1)_* \alpha_1 + (\iota_2)_* \alpha_2$, where $\iota_i$ is the inclusion $X_i \setminus D^4 \hookrightarrow X_1 \# X_2$. 
\end{thm}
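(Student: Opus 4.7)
The plan is to prove the SPWF version of the claim and then obtain the SBLF version from Lemma~\ref{SBLFvsSPWF}. Working at the SPWF level is advantageous because the inverse merge move naturally produces cusps when consolidating round loci, whereas at the SBLF level those cusps would need to be hand-converted into Lefschetz singularities after the fact.

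The first step is to use the sections to locate trivializing charts. Choose regular values $p_i \in S^2$ for $f_i$ and put $q_i = S_i \cap f_i^{-1}(p_i)$. On a small closed $4$-ball $B_i \subset X_i$ around $q_i$ the SPWF is trivial, and I identify $B_i \cong D^2 \times D^2$ so that $f_i$ is the first-factor projection, $S_i$ corresponds to $D^2 \times \{0\}$, and the fiber through $q_i$ corresponds to $\{0\} \times D^2$. The connect sum $X = X_1 \# X_2$ is then formed by deleting the interiors of $B_1$ and $B_2$ and gluing along $S^3$ in a way that matches base disks and fiber disks compatibly; the remnants of $S_i$ outside $B_i$ join across the glued $S^3$ into an embedded $2$-sphere $S \subset X$ which will provide the section.

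For the fibration itself, I assemble $f \colon X \to S^2$ by modifying each $f_i$ near its trivial chart so that the base disks match up and the two punctured fibers $\Sigma_{g_i, 1}$ glue along their boundary circles into a single closed surface $\Sigma_{g_1+g_2}$, realizing the fiber class as $(\iota_1)_*\alpha_1 + (\iota_2)_*\alpha_2$. Outside the glue region the new map agrees with $f_i$; inside the glue region I insert a round-handle cobordism built from the bidirected structure supplied by the existence argument for SPWFs recalled in Section~\ref{Preliminaries}. This produces an indefinite generic map with possibly several round components. Applying the inverse-merge move of Lekili together with flip-and-slip (Figure~\ref{Figure: Flip and slip}) consolidates these into a single round circle with embedded image and connected fibers, and Lemma~\ref{push} then relocates all remaining Lefschetz/cusp singularities to the higher side. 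The result is a genus $g_1+g_2$ SPWF on $X$ with section $S$.

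The principal technical obstacle is preventing the genus from inflating above $g_1+g_2$ during this sequence of homotopy moves: naively merging two round loci tends to produce an SPWF of genus one or two higher, just as in the round-to-round step in the proof of Lemma~\ref{SBLFvsSPWF}. The section hypothesis is exactly what averts this. By Lemma~\ref{pi1}, sections force the ``lower-side pullback'' $2$-handles in the induced handle decompositions to be unlinked with the $1$-handles, so that the two such handles coming from $f_1$ and $f_2$ can be amalgamated into a single one in the combined handle diagram without introducing any stabilization. Consequently the inverse merge performed here does not force a genus-increasing flip, and the resulting fibration has the claimed genus exactly.
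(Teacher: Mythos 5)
Your overall strategy (build the SPWF version first, then trade cusps for Lefschetz points via Lemma~\ref{SBLFvsSPWF}) is the paper's strategy, but the construction at the heart of your argument does not produce a fibration. You form $X_1 \# X_2$ by excising a small ball $B_i \cong D^2 \x D^2$ around a single point of each section and gluing along $S^3$. Topologically this is the connected sum, but the map you describe is not well defined: over the small common base disk the two punctured fibers indeed glue to a closed surface, while over every other point $q$ of the base the ``fiber'' is the disjoint union $f_1^{-1}(q) \sqcup f_2^{-1}(q)$, and the phrase ``outside the glue region the new map agrees with $f_i$'' is meaningless because both complements $X_i \setminus B_i$ occupy that part of $X$ and both must map somewhere. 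The follow-up step --- ``insert a round-handle cobordism built from the bidirected structure supplied by the existence argument'' --- is not a defined operation and never specifies the configuration of round circles or the fiber genera over the various base regions, so there is nothing concrete to feed into the inverse-merge and flip-and-slip moves. The paper's resolution of exactly this difficulty is to perform the connected sum not at a point but along the entire disk sections $D_i$ (sections of $f_i$ restricted to the complement of a fiber neighborhood): removing the fibered neighborhoods $\nu D_i \cong D^4$ and gluing identifies the bases and connect-sums the fibers fiberwise at the section points over a large disk, yielding an honest indefinite fibration with four explicit regions (disconnected fibers $\Sigma_{g_1} \sqcup \Sigma_{g_2-1}$ over a small disk, then connected fibers of genera $g_1+g_2-1$, $g_1+g_2-2$, $g_1+g_2-1$), separated by one new round circle from the sum and the two original cusped circles. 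This ``large'' connected sum along the sections is the key idea your proposal is missing.

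Your genus-control argument is also not valid. Lemma~\ref{pi1} is a Betti-number inequality; the observation inside its proof that a section unlinks the lower-side $2$-handle from the $1$-handles has no bearing on whether an inverse merge forces a genus-raising flip, and cannot be used to ``amalgamate'' handles across the two summands. In the paper the genus $g_1+g_2$ is obtained by direct bookkeeping: after the large connected sum the maximal fiber genus is $g_1+g_2-1$, the two cusped circles are inverse-merged, and then flip-and-slip is applied at both ends (deliberately \emph{without} eliminating the resulting cusps) to raise the two outer disk regions to genus exactly $g_1+g_2$ before a final inverse merge produces a single singular circle. The role of the sections is structural --- they furnish the fibered $4$-balls $\nu D_i$ along which the connected sum is taken and their push-offs survive as a section of $f$ --- not to prevent genus inflation during the homotopy moves.
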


\begin{proof}
By assumption, we can remove the tubular neighborhood of a higher genus (resp. lower genus) fiber of $f_1: X_1 \to S^2$ (resp. of $f_2: X_2 \to S^2$) so that the restriction of $f_1$ (resp. $f_2$) to the complement admits a disk section $D_1$ (resp. $D_2$) which restricts to a trivial $1$-braid on the boundary $S^1 \x \Sigma_{g_1}$ (resp. $S^1 \x \Sigma_{g_2-1}$). We can moreover isotope all the self-intersection points of these sections so that they are contained in these fiber neighborhoods. We can now perform a ``large'' version of the connected sum operation of \cite{P, B1} along these two disk sections: \linebreak We take out the disk fibered tubular neighborhoods of each $D_i$ and identify \linebreak $D_1 \x \partial D^2$ with $D_2 \x \partial D^2$ so as to extend the fibrations on the rest. The local model for the round $2$-handle cobordism from the trivial boundary fibrations on $S^1 \x \Sigma_{g_1} \# \Sigma_{g_2-1}$ to $S^1 \x (\Sigma_{g_1} \sqcup \Sigma_{g_2-1})$ is the same as the one for the usual connected sum operation, along which we identify $\partial D_1 \x D^2$ with $\partial D_2 \x D^2$ so that the underlying topological modification amounts to gluing $X_1 \setminus \, D_1 \x D^2$ and $X_2 \setminus \, D_2 \x D^2$ along their boundaries, resulting in $X=X_1 \# X_2$.

\begin{figure}[ht] 
\begin{center}
\includegraphics[scale=0.6]{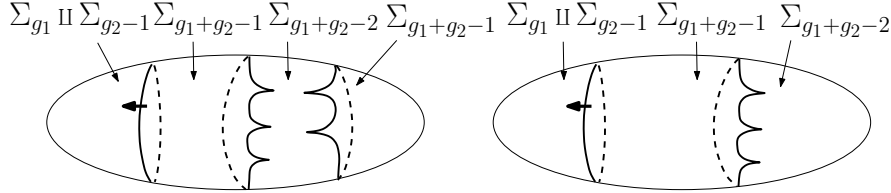}
\caption{\small On the left: The base diagram for the fibration we get after the ``large'' connected sum along the disk sections. The singular circle coming from $f_1$ is depicted with three cusps and that of $f_2$ with two. On the right: The base diagram after inverse merging the two indefinite singular circles with cusps.}
\label{Figure: ConnectedSum}
\end{center}
\end{figure}

We obtain a new fibration on $X=X_1\# X_2$ as depicted on the left in Figure~\ref{Figure: ConnectedSum}. The fibers over the four regions are as follows: On the far left, we have disconnected fibers of genera $g_1$ and $g_2-1$ over the $2$-disk, where the fibration restricts to trivial surface bundles on each piece. Over the neighboring annular region we have connected fibers of genera $g_1+g_2-1$. We have the simple round singularity introduced by the connected sum operation in between these two regions. Over the next neighboring annular region we have fibers of genera $g_1+g_2-2$. The fibered cobordism between these two annular regions comes from the singular circle of $f_1$ in $X_1$. Finally, on the far right, we have a disk region over which fibers have genera $g_1+g_2-1$. The fibered cobordism between this disk region and its neighboring annular region comes from the singular circle of $f_2$ in $X_2$. There are cusps on both sides of the third region pointing inwards.

We now proceed as follows: We can first combine the two singular circles with cusps on the right by inverse merging them along an arc between a cusp point on each which projects to an embedded arc on the base. At this point we have an indefinite generic map with a base diagram as given on the right in Figure~\ref{Figure: ConnectedSum}. The fibers over the three regions are as follows: The region on the far left is the same as the far left region of the previous fibration, over which we have disconnected fibers of genera $g_1$ and $g_2-1$. The region on the far right has fibers of genera $g_1+g_2 -2$. The annulus region has fibers of genera $g_1+g_2-1$, and all the cusp points are contained on the singular circle on its right, pointing into the disk region. 

We can now employ the flip and slip move to both ends of this fibration, \textit{without eliminating the cusp points at the end}. We now have three regions for this fibration, where the fibers on the far left and far right are both connected and of genera $g_1+g_2$, and the cusps are pointing into the annular region in the middle. Inverse merging these two singular circles with cusps, we obtain a genus $g_1+g_2$ SPWF on $X$. Clearly, we can turn all the cusps into Lefschetz critical points at the end to obtain a genus $g_1+g_2$ SBLF on $X$ as well.

A push-off of the section $S_i$ of $f_i\colon\,  X_i \to S^2$ for each $i = 1, 2$ will give rise to a section of $f\colon\, X \to S^2$ at the end, since all the homotopies involved above can be performed away from $S_i$. As for the last assertion on the homology of the new fiber, observe that in the very first step of our construction, the homology of the fiber over the far left region is represented by the disjoint union of the inclusion of the fibers of $f_1$ and $f_2$. The following homotopy moves keep this homology class intact.  
\end{proof}

One of the subtleties in modifying SBLFs as in Theorem \ref{SPWFconnected} is due to the fact that the singular circle of the fibration may not contain cusps nor could be turned into one without increasing the genus. Although an indefinite cusp point can be turned into a Lefschetz critical point, this trade does not always work backwards. One can \emph{sink} a Lefschetz critical point into an indefinite fold singular circle and introduce a new cusp point, if for a nearby reference fiber the vanishing cycle of the Lefschetz singularity and the vanishing cycle of the indefinite fold circle intersect transversally at one point. In this case, we are going to call this Lefschetz critical point \emph{sinkable} ---to an indicated (part of an) indefinite fold circle when it is not an SBLF. The next corollary now follows from the proof of the above theorem in a straightforward way, as we keep pushing the Lefschetz critical points to the higher sides:

\begin{cor} \label{SBLFconnected}
Let $f_i\colon\,  X_i \to S^2$ be a genus $g_i$ SBLF with a section $S_i$, for $i=1,2$. Let $f_1: X_1 \to S^2$ have non-empty round locus and a sinkable Lefschetz critical point. Then there exists an SBLF $f\colon\, X=X_1 \# X_2 \to S^2$ of genus $g_1+ g_2$, if $(X_2, f_2)$ has empty round locus or a sinkable Lefschetz critical point, and of genus $g=g_1+g_2+1$ otherwise. If $\alpha_i$ is the homology class of the fiber of the SBLF $f_i$, then the homology class of the fiber of $f$ is  $(\iota_1)_* \alpha_1 + (\iota_2)_* \alpha_2$, where $\iota_i$ is the inclusion $X_i \setminus D^4 \hookrightarrow X_1 \# X_2$. 
\end{cor}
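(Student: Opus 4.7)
The plan is to run the connected-sum construction from the proof of Theorem \ref{SPWFconnected}, using the sinkability hypotheses to conjure cusps on the singular circles without resorting to Lemma \ref{SBLFvsSPWF}, which would cost $+1$ in genus.

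First I would apply the sinking move to $f_1$: since the sinkable Lefschetz critical point of $f_1$ has its vanishing cycle intersecting the vanishing cycle of the indefinite fold circle transversally at one point, I can sink it into the round circle to produce a cusp on the singular circle of $f_1$. This trades one Lefschetz critical point for one cusp on the singular locus; it leaves the base surface, the genus, and the fiber homology class $\alpha_1$ unchanged. If $f_2$ also has a sinkable Lefschetz critical point, I would do the same for $f_2$. At this point each modified fibration has a cusp available on its singular circle, so I can directly run the construction from the proof of Theorem \ref{SPWFconnected}: perform the large connected sum along $S_1$ and $S_2$, inverse merge the singular circles of $f_1$ and $f_2$ along an arc connecting two cusp points, apply flip-and-slip to both ends of the fibration \emph{without} turning the resulting cusps into Lefschetz critical points, and inverse merge the two remaining singular circles with cusps. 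This yields a genus $g_1 + g_2$ SPWF on $X_1 \# X_2$, which I would convert to an SBLF of the same genus by unsinking each cusp (as in Lemma \ref{SBLFvsSPWF}), all the while pushing Lefschetz critical points to the higher side via Lemma \ref{push}.

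If instead $f_2$ has empty round locus, so that $f_2$ is an honest genus $g_2$ Lefschetz fibration, the same construction still goes through: there is now only one singular circle from $f_1$ (already carrying a cusp from the sinking step) to merge with the simple round circle introduced by the connected-sum operation, and the Lefschetz critical points of $f_2$ are carried along as Lefschetz critical points on the higher side of the resulting SBLF of genus $g_1 + g_2$. In the remaining case, where $f_2$ has non-empty round locus but no sinkable Lefschetz critical point, I cannot produce a cusp on the singular circle of $f_2$ at no cost in genus, so I would apply Lemma \ref{SBLFvsSPWF} to obtain a genus $g_2 + 1$ SPWF on $X_2$; pairing this with the sunk version of $f_1$ via Theorem \ref{SPWFconnected} delivers a genus $g_1 + g_2 + 1$ SBLF on $X_1 \# X_2$. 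The identification of the fiber class as $(\iota_1)_* \alpha_1 + (\iota_2)_* \alpha_2$ is identical to the argument in Theorem \ref{SPWFconnected}: the large connected sum along disk sections preserves the disjoint union of fibers of $f_1$ and $f_2$ over the far-left disk region, and all subsequent homotopy moves keep the regular fiber homology class intact.

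The main subtlety I anticipate is coordinating the cusp produced by sinking with the arcs needed for the inverse merges in the subsequent construction, and making sure no Lefschetz critical points wander off to the lower side during the flip-and-slip and merging steps. The sinkability hypothesis is precisely the condition that makes the initial move go through without a genus change, and once a cusp is available on each relevant singular circle, every subsequent move is already in the toolkits of Theorem \ref{SPWFconnected}, Lemma \ref{SBLFvsSPWF}, and Lemma \ref{push}, so the proof should reduce to careful bookkeeping rather than any new geometric input.
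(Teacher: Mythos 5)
Your proposal is correct and matches the paper's intended argument: the paper itself only justifies this corollary with the one-line remark that it ``follows from the proof of Theorem \ref{SPWFconnected} in a straightforward way, as we keep pushing the Lefschetz critical points to the higher sides,'' and your elaboration --- using sinkability to supply the cusps needed for the inverse merges, running the large connected-sum/flip-and-slip machinery without the $+1$ genus cost of Lemma \ref{SBLFvsSPWF} whenever a cusp is available on each relevant singular circle, and falling back on that lemma only when $f_2$ has a cuspless, unsinkable round circle --- is precisely the bookkeeping the author is gesturing at. The case analysis and the fiber-class computation are handled correctly, so no gap.
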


\noindent When the genus $g_1$ SBLF $f_1: X_1 \to S^2$ has no round locus or no sinkable Lefschetz critical point, after a single birth or flip and slip move, we can switch to a genus $g_1+1$ SBLF and apply the corollary.

\vspace{0.2in}
\section{Broken genera and exotic simply-connected $4$-manifolds} \label{Exotic}

Freedman's seminal work in \cite{Freedman}, together with Donaldson's diagonalization result \cite{Donaldson}, allows us to pin down the homeomorphism type of a simply-connected $4$-manifold by looking at its intersection form. In particular a non-spin simply-connected $4$-manifold is homeomorphic to $a \CP \# b \CPb$ for some $a,b$, and if spin, assuming that the $11/8$ conjecture holds, it is homeomorphic to $m \K \# n (S^2 \x S^2)$ with one of the orientations. Our first theorem in this section calculates the broken genera of these standard simply-connected $4$-manifolds: 

\begin{thm} \label{constructions}
The only $4$-manifolds with broken genus zero are $S^2 \x S^2$ and $\CP \# \CPb$. The $4$-manifolds $S^4$, $\# r (S^2 \x S^2)$ for $r \geq 2$, $a \CP \# b \CPb$ for \linebreak $a \geq 0, b \geq 1$ except for $a=b=1$, and $\K$, all have broken genus one. The $4$-manifolds $\# m \CP$, for $m \geq 1$, and $\Kb$ have broken genus two.
\end{thm}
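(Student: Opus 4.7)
The plan splits naturally by the value of the broken genus.

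\textit{Broken genus zero.} A genus $0$ SBLF has $S^2$ fibers, hence no essential simple closed curves and no Lefschetz critical points; a non-empty round locus would force a lower-side fiber of genus $-1$, absurd. Therefore every genus-zero SBLF is an honest $S^2$-bundle over $S^2$, so $X \cong S^2 \x S^2$ or $\CP \# \CPb$. Conversely, each bundle projection realizes $g(X)=0$ on the non-trivial fiber class of self-intersection zero.

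\textit{Broken genus one.} The lower bound $g(X) \geq 1$ for each listed manifold is immediate from the genus-zero characterization. For the upper bounds I will exhibit a genus $1$ SBLF case by case: on $S^4$ and $\CPb$ via Figure~\ref{trivialgenusoneSBLFs}; on $\K$ via its standard elliptic Lefschetz fibration over $S^2$; on $\#b\CPb$ for $b \geq 1$ by iterating Lemma~\ref{relativeminimality} on $\CPb$'s SBLF; and on the remaining cases (namely $\#r(S^2 \x S^2)$ with $r \geq 2$ and $\#a\CP \# b\CPb$ with $a,b \geq 1$, $(a,b) \neq (1,1)$) by prescribing explicit positive factorizations of powers of Dehn twists in $\mathrm{MCG}(T^2) = SL(2,\Z)$ compatible with a round vanishing cycle, using standard $SL(2,\Z)$-relations (e.g.\ $(T_aT_b)^6 = 1$) and recognising the resulting total spaces from their handle decompositions in the spirit of Hayano's classification of genus-one SBLFs.

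\textit{Broken genus two.} The upper bounds follow at once from Proposition~\ref{orientation} applied to the genus-one SBLFs just built: $g(\#m\CP) \leq g(\#m\CPb) + 1 = 2$ and $g(\Kb) \leq g(\K) + 1 = 2$. For the lower bounds, Remark~\ref{ruled} directly handles $\CP$. To extend this to $\#m\CP$ for $m \geq 2$ and to $\Kb$, I first rule out empty-round-locus (honest) genus-one Lefschetz fibrations via Moishezon's theorem: any relatively minimal, simply connected such fibration is an elliptic surface $E(n)$ with intersection form $-nE_8 \oplus (2n-1)H$, which matches neither the diagonal positive-definite form of $\#m\CP$ nor the signature-$(+16)$ even form $2E_8 \oplus 3H$ of $\Kb$. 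For a non-empty round locus, the higher-side monodromy is a product of $m$ (respectively, $22$) positive Dehn twists in $SL(2,\Z)$ lying in the stabilizer $\langle T_\alpha \rangle$ of the round vanishing cycle; a classification of such positive factorizations of $T_\alpha^n$ modulo Hurwitz equivalence rules out any with total space $\#m\CP$ or $\Kb$, since the arising total spaces always contain either an $E(k)$-summand (impossible in $\#m\CP$ by positive-definiteness) or embedded self-intersection $(-1)$-spheres (impossible for $\Kb$ since its intersection form is even).

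The main obstacle will be the fine $SL(2,\Z)$-monodromy analysis: on the upper-bound side, explicitly producing positive factorizations that realize $\#r(S^2 \x S^2)$ and the ``hard'' range of $\#a\CP \# b\CPb$; and on the lower-bound side, exhaustively ruling out positive factorizations whose total space would match $\#m\CP$ or $\Kb$. Both rely on controlling positive factorizations of powers of Dehn twists in $SL(2,\Z)$ and computing the underlying $4$-manifolds from the induced handle decompositions.
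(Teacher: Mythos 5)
Your genus-zero characterization and your genus-one lower bounds match the paper, and your genus-two upper bounds via Proposition~\ref{orientation} are exactly the paper's argument. But there are two genuine gaps. First, the existence of genus-one SBLFs on $\# r (S^2 \x S^2)$, $r\geq 2$, and on $a\CP\# b\CPb$ in the full range is precisely the hard content of the theorem, and you defer it to unspecified ``explicit positive factorizations'' in $SL(2,\Z)$ that you never produce. The paper does not construct these by monodromy factorizations at all: it gets $\# r(S^2\x S^2)$ by induction from Corollary~\ref{SBLFconnected} (connected sum of SBLFs with sections, starting from the genus-zero bundle on $S^2\x S^2$), and it gets $\# a\CP\#\CPb$ by starting from a genus-zero bundle on $\CP\#\CPb$, introducing a birth, performing the Auroux--Donaldson--Katzarkov anti-blow-up at a Lefschetz critical point (which connect-sums with $\CP$), and inverse-merging the two round circles; arbitrary further $\CPb$ summands then come from Lemma~\ref{relativeminimality}. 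Without either that machinery or actual factorizations, your upper bounds for these families are unproved.

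Second, and more seriously, your lower-bound argument for the genus-two cases rests on a false dichotomy. You claim that every genus-one SBLF with non-empty round locus has total space containing either an $E(k)$-summand or an embedded $(-1)$-sphere; but $S^4$ and $\# r(S^2\x S^2)$ admit such fibrations (the former is Figure~\ref{trivialgenusoneSBLFs}, the latter is constructed in the proof) and contain neither. So this dichotomy cannot be the mechanism that excludes $\# m\CP$ and $\Kb$. What actually excludes them is nontrivial input from Hayano: his monodromy analysis in \cite{H1} showing that $\# m\CP$ admits no genus-one SBLF, and his theorem in \cite{H2} that a simply-connected \emph{spin} $4$-manifold admitting a genus-one SBLF with non-empty round locus has signature zero (which rules out $\Kb$, of signature $+16$); the empty-round-locus case is handled, as you say, by the classification of genus-one Lefschetz fibrations. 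Note also that the Baykur--Kamada classification alone does not suffice for $\# m\CP$, since $\# m\CP\#\CPb$ already decomposes as a connected sum of $\CP$s and $\CPb$s; the refined exclusion really is needed.
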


\begin{proof}
The first statement is obvious; see Remark \ref{ruled} above. So any manifold admitting a genus one SBLF has broken genus one, unless it is diffeomorphic to $S^2 \x S^2$ or $\CP \# \CPb$. We immediately see that the broken genera of $S^4$, $\CPb$ (see Figure~\ref{trivialgenusoneSBLFs}), and $\K$ are one.

We are going to prove that $g(\# r (S^2 \x S^2))=1$ by induction. We claim that for each $r \geq 2$, there exists a genus one SBLF on  $\# r (S^2 \x S^2)$ with non-empty round locus, with sinkable Lefschetz critical points, and with a section. Since $S^2 \x S^2$ admits a genus zero bundle with a section, by Corollary \ref{SBLFconnected}, $\# 2 (S^2 \x S^2)$ admits a genus one SBLF satisfying all the conditions listed above. We can then induct on $r \geq 2$ by invoking Corollary \ref{SBLFconnected} again. 

For $a=b > 1$, one can prove the analogous result for $a \CP \# b \CPb$ in the same way as above. However to realize all possible pairs of $a,b$, we will take a different approach. We are going to first show by induction that for each $a \geq 2$, $\# a \CP \# \CPb$ admits a genus one SBLF with non-empty round locus, with sinkable Lefschetz critical points, and with a section. After that, using Lemma \ref{relativeminimality}, we can get genus one SBLFs on any blow-up of $a \CP \# \CPb$.

\begin{figure}[ht]  
\begin{center}
\includegraphics[scale=0.7]{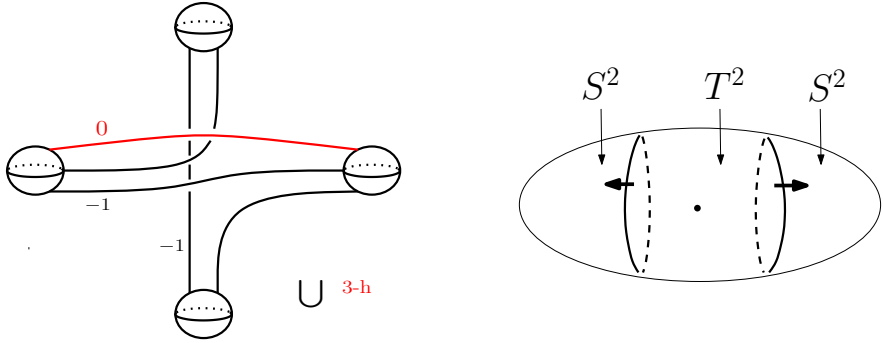}
\caption{\small On the left: The local model after birth. The vanishing cycles of the two Lefschetz singularities and the round $2$-handle lie on the once punctured torus as depicted in the handle diagram. The round $2$-handle, as a union of a $2$-handle and a $3$-handle, is given in red. On the right: The broken Lefschetz fibration on $2\CP \# \CPb$, before simplifying it to get an SBLF.}
\label{Figure: Birth and ADK}
\end{center}
\end{figure}

Start with any genus zero bundle on $\CP \# \CPb$ with a section. Introduce a birth to get a genus one SBLF on $\CP \# \CPb$ with two Lefschetz critical points, which are clearly sinkable. (See Figure~\ref{Figure: Birth and ADK}.) We can locally replace one of these Lefschetz critical points with a round singular circle after the operation given in Example~3 of \cite{ADK}. For a chosen small disk neighborhood of the Lefschetz critical value, the new round singularity is introduced so that its image is an embedded curve parallel to the boundary of this disk and the fibers in the interior are of smaller genera. Importantly, for a reference fiber on the boundary of this disk, the vanishing cycle of this new round $2$-handle is precisely the vanishing cycle of the old Lefschetz critical point. As shown in \cite{ADK}, this amounts to an anti-blow-up at the Lefschetz critical point, that is the new $4$-manifold we get is a connected sum of the old one with $\CP$. We can certainly perform this operation away from the section so that it embeds as a section of the new fibration. 

At this point we have an SBLF with three regions: In the middle we have a genus one Lefschetz fibration with one Lefschetz critical point, and on the sides we have two $S^2$ bundles which are round $2$-handle cobordant to the fibration in the middle. (See Figure~\ref{Figure: Birth and ADK}.) Tracing the initial configuration of curves given by the birth throughout these steps, we can see that the vanishing cycles of the round $2$-handles intersect at one point on a reference fiber in the middle. Therefore, one can perform the inverse merge move to mash these two round circles into one \cite{L, W}. Hence we get a genus one SBLF on $2 \CP \# \CPb$ with a section and with a sinkable Lefschetz singularity. By applying the same procedure as above, this time without the birth and starting with the anti-blow-up modification along a sinkable Lefschetz critical point, we can induct on $a$ to get the desired genus one SBLF on $a \CP \# \CPb$, for any $a \geq 2$. This completes the genus one case. 

Analyzing the possible monodromies, Hayano shows in \cite{H1} that $ \# m \CPb$ does not admit genus one SBLFs. Since $\# m \CPb$ admits a genus one SBLF for each $m \in \Z^+$, $\# m \CP$ admits a genus two SBLF by Proposition ~\ref{orientation}, realizing its broken genus. On the other hand, another result of Hayano's is that if a simply-connected spin $4$-manifold admits a genus one SBLFs with non-empty round locus, then it has zero signature \cite{H2}. Clearly $\Kb$ does not admit a genus one Lefschetz fibration, nor does it admit a genus one broken Lefschetz fibration. However, it admits a genus two SBLF, again by Proposition \ref{orientation}. 
\end{proof}

\begin{rk} \rm
Extending the well-known classification of genus one Lefschetz fibrations, Seiichi Kamada and the author obtained a classification of genus one SBLFs in \cite{BK}. (Also see Hayano's work \cite{H1} for SBLFs with less than six Lefschetz critical points.) Low genera SBLFs on some of the above $4$-manifolds were given in \cite{BK}, \cite{H1} and \cite{HS1}, using explicit monodromy and handlebody descriptions followed by rather lengthy Kirby calculus. The first two articles show that there are other genus one SBLFs on standard non-simply-connected $4$-manifolds.
\end{rk}

\begin{cor} \label{SW}
Let $X$ be a simply-connected $4$-manifold with $b^+(X)>1$, not diffeomorphic to an elliptic surface. If $X$ has a non-trivial Seiberg-Witten invariant, then $g(X) >1$. 
\end{cor}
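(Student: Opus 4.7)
The plan is to argue by contradiction: I would suppose $g(X) \leq 1$ and examine the resulting structural cases.

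First, if $g(X) = 0$, Theorem~\ref{constructions} forces $X \cong S^2 \times S^2$ or $X \cong \CP \# \CPb$, which is impossible since both have $b^+ = 1$. So suppose $g(X) = 1$: some class $\alpha \in H_2(X)$ with $\alpha^2 = 0$ then admits a genus one SBLF $f \colon X \to S^2$. I would split into subcases according to the round locus of $f$. If the round locus is empty, then $f$ is a relatively minimal genus one Lefschetz fibration over $S^2$ on a simply-connected $X$; Moishezon's classical classification identifies $X$ with some elliptic surface $E(n)$, contradicting the hypothesis. If the round locus is non-empty, then the lower side of $f$ is $D^2 \times S^2$, producing an embedded $2$-sphere $S \subset X$ with $S^2 = 0$ and $[S] = \alpha$. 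Assuming $\alpha \neq 0$, the Seiberg-Witten adjunction inequality applied to any basic class $K$ of $X$ yields
\[ -2 \geq \alpha^2 + |K \cdot \alpha| = |K \cdot \alpha| \geq 0, \]
so no basic class can exist, contradicting the non-trivial Seiberg-Witten hypothesis.

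The main obstacle will be the remaining possibility $\alpha = 0$, where the adjunction inequality does not apply to the nullhomologous sphere $S$. Here I would invoke the classification of genus one SBLFs of Baykur--Kamada \cite{BK}, together with Hayano's structural results cited in the remark following Theorem~\ref{constructions}, to pin down the diffeomorphism type of such $X$ as a standard connected sum on which Seiberg-Witten invariants vanish by the classical connected sum vanishing theorem (for summands each with $b^+ \geq 1$), completing the contradiction. An alternative route, which I would try in parallel, is to show directly from the genus one SBLF handle decomposition with $\alpha = 0$ (two $1$-handles, $k + 3$ \, $2$-handles, with the fiber-class $2$-handle algebraically trivial) that $X$ smoothly splits along the lower-side $S^2 \times S^1$ into pieces each carrying $b^+ \geq 1$, again forcing SW to vanish.
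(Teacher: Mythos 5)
Your plan reaches the right conclusion, but it is worth comparing it with the paper's argument, which is much shorter: the paper invokes the classification of genus one SBLFs from \cite{BK} \emph{once}, for all cases at the same time. That classification says a simply-connected $4$-manifold admitting a genus one SBLF is either an elliptic surface or becomes a connected sum of $\CP$s and $\CPb$s after some blow-ups; with $b^+(X)>1$ the latter has at least two $\CP$ summands, so the blow-up has vanishing Seiberg--Witten invariant, contradicting the blow-up formula. Your finer case division (Moishezon for empty round locus; the lower-side sphere plus adjunction for non-empty round locus and $\alpha\neq 0$) is valid and more elementary where it applies, but note that the case you correctly identify as the real obstacle, $\alpha=0$, is resolved in your plan only by appealing to exactly the same \cite{BK} classification -- so in the end you have not found an independent route, only a longer path to the same key input. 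Two further caveats. First, in the $\alpha\neq 0$ subcase you cannot literally quote the adjunction inequality $2g-2\geq [\Sigma]^2+|K\cdot[\Sigma]|$, which is stated for $g\geq 1$; what you need is the separate (true) theorem that an essential embedded sphere of non-negative square in a $4$-manifold with $b^+>1$ forces the Seiberg--Witten invariants to vanish. Second, your ``alternative route'' of splitting $X$ along the lower-side $S^1\times S^2$ does not work as sketched when $\alpha=0$: a nullhomologous sphere of square zero gives a separation $X=(D^2\times S^2)\cup W$, but this is a surgery-type decomposition, not a connected sum into pieces each with $b^+\geq 1$, and no vanishing theorem applies directly. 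Finally, when you do invoke \cite{BK}, remember that the decomposition is only of a \emph{blow-up} of $X$, so the blow-up formula for Seiberg--Witten invariants is an essential (and in the paper explicit) step.
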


\begin{proof}
As shown in \cite{BK}, any \textit{simply-connected} $4$-manifold $X$ admitting a genus one SBLF is either an elliptic surface or decomposes as a connected sum of $\CP$s with $\CPb$s after a certain number of blow-ups. In the latter case, our assumption $b^+(X) > 1$ implies that we get more than one copy of $\CP$ in such a decomposition, which would yield the vanishing of the Seiberg-Witten invariant. However, if $X$ has a non-trivial Seiberg-Witten invariant, then so does any blow-up of it. So $g(X) > 1$.
\end{proof}

\begin{rk} \rm
The reader will notice that the proof of the above corollary has a certain formalism, which allows us to rephrase the corollary, say by using $4$-dimensional Ozsv\'ath-Szab\'o invariants or Bauer-Furuta's stable cohomotopy Seiberg-Witten invariants instead.
\end{rk}

Coupling the classification of genus one SBLFs in \cite{BK} and Corollary \ref{SW}, we get:

\begin{cor} \label{symplectic}
Any symplectic $4$-manifold which is neither elliptic nor homeomorphic to a rational surface has broken genus greater than one.
\end{cor}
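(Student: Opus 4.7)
The plan is to argue by contradiction: suppose $X$ is symplectic, neither elliptic nor homeomorphic to a rational surface, yet $g(X)\le 1$.

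First I would rule out $g(X)=0$ using Theorem~\ref{constructions}, which lists the only $4$-manifolds with broken genus zero as $S^2\x S^2$ and $\CP\#\CPb$ — both rational surfaces, contradicting the hypothesis. So $g(X)=1$, meaning $X$ itself carries a genus one SBLF whose fiber class has self-intersection zero. The strategy from here is to pivot on $b^+(X)$.

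When $b^+(X)>1$, Taubes' theorem endows the symplectic manifold $X$ with a non-trivial Seiberg-Witten invariant, so Corollary~\ref{SW} — whose proof uses the \cite{BK} classification of simply-connected $4$-manifolds admitting genus one SBLFs as either elliptic or, after blow-up, a connected sum of copies of $\CP$ and $\CPb$ — directly yields $g(X)>1$, a contradiction. When $b^+(X)=1$, I would invoke the Li-Liu/Ohta-Ono classification of $b^+=1$ symplectic $4$-manifolds as rational or ruled; non-rationality forces $X$ to be an irrational ruled surface. This last case would be handled by combining the genus-one SBLF bound $b_1(X)\le 2g=2$ from Lemma~\ref{pi1} with $b_1(X)=2h$ for a ruled surface over a genus $h$ base (squeezing $h\le 1$), then disposing of the remaining $h=1$ case via the Euler characteristic constraint $\eu(X)\in\{k,k+2\}$ from Lemma~\ref{euler} and a direct monodromy analysis in the spirit of Remark~\ref{ruled}, where the vanishing cycle fixed by the higher-side monodromy would have to produce a section of the ruling incompatible with the $T^2$-base.

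The hardest step is the $b^+(X)=1$ irrational ruled case: both the \cite{BK} classification and Corollary~\ref{SW} are formulated only for simply-connected targets, so non-simply-connected ruled surfaces cannot be handled as a black box. Either one must push through by hand using Lemmas~\ref{euler} and \ref{pi1} together with the round $2$-handle structure reviewed in Section~\ref{Preliminaries}, or alternatively pass to a suitable finite cover of $X$ — on which the hypotheses of Corollary~\ref{SW} apply — and pull back the conclusion, checking that the broken genus behaves compatibly under such covers.
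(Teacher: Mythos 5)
There is a genuine gap. The paper's proof is a one-step appeal to the classification of genus one SBLFs in \cite{BK}, which is valid for \emph{arbitrary} fundamental groups: its Corollary~14 says the only symplectic $4$-manifolds admitting a genus one SBLF are $S^2 \x T^2$, the $E(n)$, and possibly some exotic $m\CP \# n\CPb$; the first two are elliptic, the case $m\leq 1$ is homeomorphic to a rational surface, and $m>1$ is killed by Taubes plus Corollary~\ref{SW}. You do not use this input, and your by-hand replacement does not close. Concretely: (a) your $b^+(X)=1$ branch rests on a ``Li--Liu/Ohta--Ono classification of $b^+=1$ symplectic $4$-manifolds as rational or ruled,'' which is not a theorem --- those results require an extra hypothesis such as $K\cdot\omega<0$ or a positive scalar curvature metric. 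There are symplectic $4$-manifolds with $b^+=1$ that are neither rational, ruled, nor elliptic (e.g.\ fake projective planes, or minimal general type surfaces with $p_g=0$ and nontrivial $\pi_1$), and these are not homeomorphic to rational surfaces, so they are genuinely in the scope of the corollary and your case analysis misses them. (b) Your $b^+(X)>1$ branch invokes Corollary~\ref{SW} as a black box, but that corollary is stated only for \emph{simply-connected} $X$; you flag the simple-connectivity issue for the $b^+=1$ branch but not here, so non-simply-connected symplectic $X$ with $b^+>1$ (e.g.\ surface bundles over surfaces) are not covered.

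The remaining repairs you propose are also not proofs: the ``direct monodromy analysis in the spirit of Remark~\ref{ruled}'' for irrational ruled surfaces is a sketch of an argument the paper never needs to make, and the finite-cover idea requires knowing how the broken genus behaves under covers, which is established nowhere in the paper (a genus one SBLF need not pull back to a genus one SBLF on a cover, since the preimage of a fiber can be disconnected). The fix is to cite \cite{BK}, Corollary~14, in its full, non-simply-connected form, after which the only surviving case is the exotic $m\CP\# n\CPb$ with $m>1$, eliminated exactly as in your $b^+>1$ branch.
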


\begin{proof}
The only symplectic $4$-manifolds that appear in the classification of genus one SBLFs are the elliptic surfaces $S^2 \x T^2$, $E(n)$, and possibly some exotic $m\CP \# n \CPb$s. (See \cite{BK}, Corollary 14.) Since any symplectic $4$-manifold $X$ with $b^+(X)>1$ has non-trivial Seiberg-Witten invariant, Corollary \ref{SW} rules out the possibility $m > 1$. 
\end{proof}

In Proposition \ref{unbounded} we showed that the broken genus invariant could take arbitrarily high integer values by calculating the invariant on a family of non-homeomorphic $4$-manifolds. Our next theorem is an analogue of it for pairs, which is however sharper, as we show that there is a family of pairs $(X_n, \alpha_n)$ \emph{all in the same homeomorphism class} (i.e. the homeomorphisms match the homology classes as well), whose broken genera get arbitrarily high.

\begin{thm} \label{knotsurgery}
There is an infinite family of pairs $(X_n, \alpha_n)$, $ n \in \Z^+$, all homeomorphic to $(\K, \alpha)$, where $\alpha$ is the homology class of an elliptic fiber, such that no two of these $4$-manifolds are diffeomorphic and $g(X_n, \alpha_n)$ can take arbitrarily high integer values. Moreover, we can choose $X_n$ so that they are all symplectic or none admits a symplectic structure.  
\end{thm}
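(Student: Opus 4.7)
The approach is Fintushel--Stern knot surgery applied to the K3 surface. Fix an elliptic fibration $\K\to S^2$ with a regular torus fiber $T$, so $\alpha=[T]$. For each knot $K_n\subset S^3$, form $X_n:=\K_{K_n}$ by excising $\nu(T)\cong T\x D^2$ and regluing $(S^3\setminus\nu(K_n))\x S^1$ with the meridian of $T$ matched to the Seifert-framed longitude of $K_n$; let $\alpha_n\in H_2(X_n)$ be the class of the new torus. Choose the $K_n$ with pairwise distinct Alexander polynomials and $\deg\Delta_{K_n}\to\infty$, e.g.\ the $(2,2n{+}1)$-torus knots. A cusp fiber of the elliptic fibration provides a disk bounding the meridian of $T$ in $\K\setminus\nu(T)$, so van Kampen yields $\pi_1(X_n)=1$; since knot surgery preserves the intersection form, Freedman's classification gives a homeomorphism $X_n\approx\K$ which one arranges to identify $\alpha_n$ with $\alpha$. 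The Fintushel--Stern formula $\SW_{X_n}=\Delta_{K_n}(t^{2[T_n]})$ then forces the $X_n$ to be pairwise non-diffeomorphic. Choosing the $K_n$ fibered produces a symplectic family via Fintushel--Stern--Thurston; choosing them non-fibered with non-monic $\Delta_{K_n}$ produces a non-symplectic family, since otherwise Taubes' theorem would force the top coefficient of $\SW_{X_n}$ to be $\pm 1$.

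The essential step is the lower bound $g(X_n,\alpha_n)\to\infty$. Suppose $X_n$ admits a genus $g$ SBLF $f\colon\, X_n\to S^2$ with fiber class $\alpha_n$. By Lemma~\ref{push} we may assume all Lefschetz critical points lie on the higher side, which is then a genus $g$ symplectic Lefschetz fibration over a disk with at most $4g+18$ critical points (Lemma~\ref{euler}). The Thurston--Gompf form on the higher side extends to a near-symplectic structure on $X_n$ whose zero set is the round circle. The plan is then to bound the SW complexity of $X_n$ in terms of $g$ alone: either through a Perutz-style identification of $\SW_{X_n}$ with a pseudo-holomorphic multi-section count for $f$, whose range of basic classes is controlled by $g$ and the number of Lefschetz singularities, or through an extension of the classification of genus-one SBLFs (Hayano, Baykur--Kamada), used in Corollary~\ref{SW}, to all genera. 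Either route would yield $\deg\Delta_{K_n}\leq Cg$ for some absolute constant $C$, and hence $g(X_n,\alpha_n)\geq c\,\deg\Delta_{K_n}\to\infty$.

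The principal obstacle is making this SW complexity bound quantitative. Plain fiber adjunction against $\alpha_n$ is vacuous, since every basic class of $X_n$ is a multiple of $[T_n]$ and $[T_n]^2=0$. A direct adjunction using a section-like class---say the $(-2)$-sphere section $S_0$ of the elliptic K3---requires a closed surface transverse to $\alpha_n$, but after surgery $S_0$ only extends to a surface of genus $g(K_n)$ (its disk component in $\nu(T)$ must be capped by a Seifert surface for $K_n$ pushed into $(S^3\setminus K_n)\x S^1$), and further tubing with the higher-side fiber $F$ yields only surfaces of genus $g(K_n)+g$. The resulting adjunction reproduces the standard Fintushel--Stern bound $\deg\Delta_{K_n}\leq g(K_n)$, which is independent of $g$ and therefore useless for the theorem. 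Overcoming this requires either building a test surface transverse to $\alpha_n$ entirely out of the SBLF handle data on the higher side (so its genus is a function of $g$ alone), or invoking a near-symplectic SW/Gromov correspondence that is genuinely sensitive to the broken Lefschetz fibration genus---and this decoupling from $g(K_n)$ is the real geometric content of the argument.
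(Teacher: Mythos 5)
Your setup (knot surgery on $\K$, knots with distinct Alexander polynomials of growing degree, Freedman/Wall for the homeomorphisms, the Fintushel--Stern formula for non-diffeomorphism, fibered versus non-monic non-fibered knots for the symplectic dichotomy) matches the paper. But the essential step --- the lower bound $g(X_n,\alpha_n)\to\infty$ --- is left as a plan rather than a proof, and your own diagnosis of the obstacle shows why your version of the setup cannot close the gap: you take $\alpha_n$ to be the class of the surgered torus $[T_n]$, against which fiber adjunction is vacuous and section adjunction only reproduces $\deg\Delta_{K_n}\leq g(K_n)$. Neither of your proposed rescues (a Perutz-type SW/multisection correspondence for near-symplectic SBLFs, or a classification of SBLFs of all genera) exists in usable form, so as written the theorem is not proved.

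The paper's resolution is a different choice of $\alpha_n$, engineered precisely so that adjunction sees the fibration genus. One realizes $\K$ as a double branched cover of $S^2\x S^2$, giving two elliptic fibrations (``horizontal'' and ``vertical''), each with four multiplicity-two multiple fibers, such that a regular fiber $T$ of the vertical (standard) fibration is a self-intersection zero \emph{$2$-section} of the horizontal one. Knot surgery along $F$ is then reinterpreted as a generalized fiber sum of the horizontal fibration on $\K$ with a fibration $S^1\x M_{K_n}\to S^2$ (a circle-valued Morse function on $M_{K_n}$ composed with a double branched cover $T^2\to S^2$) along the $2$-sections $F$ and $S_{K_n}$. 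This produces a (broken) Lefschetz fibration on $X_n$ whose fiber $F_n$ has genus $2n+1$ and, crucially, satisfies $[T]\cdot[F_n]=2$; one sets $\alpha_n=[F_n]$, which is still identified with the class of an elliptic fiber under the homeomorphism. Now if $H$ is the fiber of any genus $h$ SBLF in the class $\alpha_n$, the basic class $2n\,PD([T])$ coming from the degree-$n$ Alexander polynomial gives $2h-2\geq [H]^2+|2n\,[T]\cdot[H]|=4n$, i.e.\ $h\geq 2n+1$; in the fibered case one instead quotes Proposition~\ref{Thom} to get $g(X_n,\alpha_n)=2n+1$ exactly. So the ``decoupling from $g(K_n)$'' you correctly identify as the real geometric content is achieved not by new gauge-theoretic input but by choosing the fiber class to be transverse to the basic-class torus; you would need to incorporate this (or an equivalent device) to complete the argument.
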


\begin{proof}
Let $K$ be a genus $g$ knot in $S^3$, where $g$ is the smallest genus of a Seifert surface for $K$. If $M_K$ is the $3$-manifold obtained by a $0$-surgery on $K$ with respect to this minimal genus Seifert surface, then it inherits a circle valued Morse function with only index one and two critical points such that the smallest genus among all the regular fibers is $g$. If $K$ is a fibered knot, then we can assume that the Morse function has no critical points. We extend this Morse function to a broken Lefschetz fibration with no Lefschetz singularities from $S^1 \times M_K$ to $T^2$ by identity on the first $S^1$ component. Here $S^1$ times the meridian of $K$ (viewed in $M_K$) gives a self-intersection zero torus $S_K$ which is a section of the broken Lefschetz fibration avoiding the singular set. We can compose this broken Lefschetz fibration with a double branched covering from $T^2$ to $S^2$ so that the four branched points of the latter are disjoint from the round image of the former on $T^2$. So we get a mixed fibration with round singularities and four multiple fibers of multiplicity two ---where the two disjoint regular fiber components meet. Moreover, $S_K$ becomes a $2$-section of this fibration, intersecting each fiber \textit{component} at one point.

Let $X = \K$ and $F$ be a regular elliptic fiber. One can construct $\K$ as the double branched cover along a singular set which is the union of four disjoint copies of $S^2 \x \{pt\}$ and four disjoint copies of $\{pt\} \x S^2$, desingularized at the 16 double points in a standard way. Precomposing the branched covering map with the projection on to each factor of $S^2 \x S^2$ gives a holomorphic ``horizontal'' and a ``vertical'' elliptic fibration, each with four multiple fibers with multiplicity two. Regarding the vertical fibration as the one that perturbs to the standard elliptic fibration on $X$, we see that any regular torus fiber of it is a self-intersection zero $2$-section of the horizontal fibration. 

Lastly, let $X_K$ be the $4$-manifold obtained from $X$ by a knot surgery along $F$ and using the knot $K$. This construction can be viewed as a generalized fiber sum of $X$ and $S^1 \x M_K$ along $F$ and $S_K$. We can view this as a sum along the $2$-sections of the two fibrations, matching the two fibrations on the complements while aligning the four multiple fibers of the same multiplicity. This results in a mixed fibration which is locally holomorphic around the multiple fibers and have only round singularities in the complement. We can perturb this fibration so that the multiple fibers give rise to Lefschetz singularities as argued in \cite{FS: same SW}, handing us a broken Lefschetz fibration with fibers of genera $2g+1$, and higher if $K$ is non-fibered. When $K$ is fibered, we obtain a genuine genus $2g+1$ Lefschetz fibration. Otherwise, it is \emph{not} simplified, but can be homotoped to one, which is a sure deal by Williams' main theorem \cite{W}. 

A crucial observation is that any regular fiber $T$ of the standard elliptic fibration on $X$ which is disjoint from $F$ we used in our construction above intersects the fiber $F$ of the resulting broken Lefschetz fibration on $X_K$ at two points positively.

We are now ready to describe the families $(X_n, \alpha_n)$ we will use. Let $K_n$ be a knot with the degree of its symmetrized Alexander polynomial equal to $n$, and set $X_n= X_{K_n}$. Recall that the knot genus is bounded below by the degree of the symmetrized Alexander polynomial, and this bound is sharp when the knot is fibered. Let $f_n: X_n \to S^2$ be the simplified broken Lefschetz fibration with a regular fiber $F_n$ one gets by the construction we described above. This is an honest genus $2n+1$ Lefschetz fibration when $K$ is fibered. Let $\alpha_n \in H_2(X_n)$ denote the homology class of the fiber of this fibration. Note that $n=0$ yields $X= X_0$ and the standard elliptic fibration $f=f_0$, whose regular fiber $F$ is in the class $\alpha = \alpha_0$. We can choose the generators of $H_2(X)$ so that two of them are the classes represented by a regular fiber $F$ and a $(-2)$-section of the standard elliptic fibration on it. Thus there is an isomorphism between the intersection forms of $X_K$ and $X$ matching the generator $\alpha_n$ with $\alpha$, which can be realized by a smooth h-cobordism, as shown by C.T.C Wall. It then follows from Freedman's work \cite{Freedman} that this h-cobordism can be \emph{topologically} turned into a trivial product cobordism, giving us the claimed homeomorphisms from $(X_n, \alpha_n)$ to $(X, \alpha)$ for any $n \geq 1$. 

All Seiberg-Witten basic classes of $X_K$ are multiples of one cohomology class, namely the Poincar\'e dual of $[T] \in H_2(X_K)$ ---viewed as the image of $[T]$ in $H_2(X \setminus \nu F)$ under the obvious inclusion. The Seiberg-Witten invariant of $X_K$ is equal to $\Delta_K(t)$, where $t= exp(2 PD([T]))$ and $\Delta_K$ is the symmetrized Alexander polynomial of $K$ \cite{FS: knot surgery}. Since each $K_n$ has a symmetrized Alexander polynomial of degree $n$, we immediately see that $X_n \cong X_m$ if and only if $n=m$, since otherwise the Seiberg-Witten invariants of $X_n$ and $X_m$ differ.

We can choose all $K_n$ fibered, thus for each $n$, we have a genus $2n+1$ Lefschetz fibration $f_n: X_n \to S^2$, which can be supported by a symplectic structure. By Proposition~\ref{Thom}, $g(X_n, \alpha_n) = 2n +1$, which constitutes a strictly increasing sequence for $n \in \Z^+$. So $(X_n, \alpha_n)$ make up the symplectic family in the statement of the theorem, where for each $n$, we constructed the SBLF realizing the broken genus for $(X_n, \alpha_n)$ above. 

If we instead choose all $K_n$ non-fibered and with non-monic degree $n$ symmetrized Alexander polynomials, then we get the second promised family of $(X_n, \alpha_n)$, which do not admit symplectic structures. To see this, suppose that $(X_n, \alpha_n)$ admits some genus $h$ \, SBLF, with a regular fiber $H$. So $[H]= \alpha_n = [F_n]$. From the knot surgery formula for the Seiberg-Witten invariants above we see that $2n PD([T])$ is a Seiberg-Witten basic class for $X_n$. By the Seiberg-Witten adjunction inequality, 
\[ -\chi(H) \geq [H]^2 + |2n \, [T] \cdot [H]| \, , \]
which implies $2h -2 \geq 4 n $. That is $h \geq 2n +1$, so $g(X_n \alpha_n) \geq 2n+1$ gets arbitrarily large for $n$ sufficiently large.
\end{proof}

\vspace{0.2in}
\section{Broken genus versus minimal genus} \label{BrokenVersusMinimal}

Given a closed oriented $4$-manifold $X$ and $\alpha \in H_2(X; \Z)$, let $mg(\alpha)$ denote the \emph{minimal genus} of $\alpha$, the smallest genus among the genera of all possible closed orientable surfaces in $X$ representing $\alpha$. For $\alpha^2 \geq 0$, for which both invariants are defined, it is clear that $mg(\alpha)$ gives a lower bound for $g(\alpha)$. As we noted in Proposition~\ref{Thom}, if $X$ admits a Lefschetz fibration of genus $g \neq 1$ with a regular fiber representing the homology class $\alpha$, then $g(\alpha)=mg(\alpha)=g$. Thus this inequality is sharp.

The minimal genus is a smooth invariant which is insensitive to the orientation of the ambient $4$-manifold, that is, $mg(\alpha)$ is unchanged under orientation reversal of $X$. Since we only allow Lefschetz singularities compatible with the orientation on the ambient $4$-manifold, it should be expected that $g(\alpha)$, for a class $\alpha^2=0$, would behave differently under the orientation reversal. Indeed, Proposition~\ref{orientation} along with Remark~\ref{ruled} demonstrate that this discrepancy occurs but in a rather controlled way: The difference between $g(\alpha)$ in $X$ and $\bar{X}$ can be $-1,0,$ or $1$, where all these possible values can be realized.

We can compare the broken genus and minimal genus for pairs $(X, \alpha)$ and $(X', \alpha')$ related to each other in more general ways than just the orientation reversal on the same smooth manifold. In particular we can compare the difference $g(\alpha)-mg(\alpha)$ to $g(\alpha')-mg(\alpha')$, so as to see if the minimal genus coupled with a certain equivalence between $(X, \alpha)$ and $(X', \alpha')$ would determine the broken genus. 

\noindent (i) Given $X$ and $X'$ closed oriented $4$-manifolds for which we have an isomorphism $\phi: H_2(X)\cong H_2(X')$, along with $\alpha \in H_2(X)$ and $\alpha'=\phi_*(\alpha) \in H_2(X')$, one can compare the difference between the broken genus and the minimal genus for pairs $(X,\alpha)$ and $(X',\alpha')$. We claim that such an equivalence does not provide any bound on the discrepancy between the broken and minimal genera of the matching classes. For example, set $X=S^2 \x S^2$ and $X'=S^2 \x \Sigma_g$ and let $\phi$ be an isomorphism between their second homologies matching the class $\alpha= [S^2 \x \{pt \}]$ in $X$ with the class $\alpha'=[S^2 \x \{pt \}]$ in $X'$. Proposition~\ref{unbounded} shows that the smallest broken genus of \emph{any} class in $S^2 \x \Sigma_g$ is $g$. So we have $mg(\alpha)=mg(\alpha')=0$, and $g(\alpha)=0$, whereas $g(\alpha') \geq g$ can be made arbitrarily large, by choosing $g$ large. 

\noindent (ii) We can restrict the above setting further and look at $4$-manifolds $X$ and $X'$ for which there is an isomorphism between their cohomology rings, inducing an isomorphism $\phi: H_2(X)\cong H_2(X')$ matching $\alpha$ with $\alpha'=\phi_*(\alpha)$. It is still not hard to find examples where a discrepancy ---greater than one--- occurs. For instance, let $X$ be $S^4$ or $\# m \CPb$, and $X'$ be a non-diffeomorphic $4$-manifold whose cohomology ring is isomorphic to that of $X$. There are infinitely many homology $4$-spheres with non-trivial fundamental groups, which would yield to such examples. It is clear that here the only candidate for a fiber class of an SBLF is the trivial class $\alpha=0=\alpha'$, which has minimal genus zero in any $4$-manifold. We know from Theorem~\ref{constructions} that $g(X,0)=1$, and moreover that $g(X',0)> 1$, since the classification result of \cite{BK} tells that if $X'$ were to admit a genus one SBLF, then after blow-ups it would decompose as a connected sum of $\CP$s and $\CPb$s with an indefinite intersection form. Hence, $g(\alpha)-mg(\alpha)=1$ whereas $g(\alpha')-mg(\alpha') \geq 2$.

\noindent (iii) Lastly, we can look at homeomorphic pairs of $4$-manifolds $X$ and $X'$ and classes $\alpha$ and $\alpha'$ matched under this homeomorphism. In Theorem~\ref{knotsurgery}, we studied families of non-symplectic $(X_n,\alpha_n)$ which are all homeomorphic to $(\K, [T])$, where $T$ is an elliptic fiber in $\K$. In this construction, one can use a carefully chosen circle valued Morse function without extrema on the complement of the non-fibered knot $K$ of genus $g$ so as to produce a broken fibration (without Lefschetz critical points) $S^1 \x M_K \to T^2$ with smallest fiber genus $g$, which in turn hands us a \emph{non-simplified} broken Lefschetz fibration on $X_K$ which has fibers of genus $2g+1$. The Seiberg-Witten calculation in the proof of Theorem~\ref{knotsurgery} shows that this is indeed the minimal genus of the resulting homology class of the fiber. However, since the $4$-manifold $X_K$ cannot be symplectic, we cannot have a Lefschetz fibration (of genus $2g+1$ or else) on it, implying that the broken genus of this class would be larger than its minimal genus. Setting $(X,\alpha)=(\K, [T])$, and $(X', \alpha')=(X_n, \alpha_n)$ for any one of these examples, we see that the difference $g(\alpha)-mg(\alpha)=0$, whereas $g(\alpha')-mg(\alpha') > 0$.

It might be conceptually helpful to note that given a closed oriented $4$-manifold $X$ and a surface $F$ which is a minimal genus representative for a class $\alpha \in H_2(X)$ with $\alpha^2=0$, one can always obtain a non-simplified broken Lefschetz fibration for $(X,\alpha)$ so that $F$ is a fiber of the resulting fibration. See for instance \cite{B2}, or Section~\ref{Preliminaries} where we sketched Gay-Kirby construction \cite{GK1} followed by an elimination of achiral points. Such a construction provides no a priori upper bound on the highest genus fiber (even when all the fibers are connected so as to make sense of this). Nevertheless, the lowest possible genus of a fiber of a general broken Lefschetz fibration on $(X, \alpha)$ is a smooth invariant, \textit{which is the same as the minimal genus} of $\alpha$ as the above argument demonstrates. With this in mind, we can compare the lowest possible genus of a fiber of a general broken Lefschetz fibration to that of a \emph{simplified} one (which differs from our broken genus invariant by at most one) on pairs $(X,\alpha)$ and $(X',\alpha')$.

The examples we produced under various equivalences (i)-(iii) above show that the difference between the lowest possible genera of simplified broken Lefschetz fibrations and of arbitrary broken Lefschetz fibrations on $(X,\alpha)$ and the same difference on $(X',\alpha')$ can still be: arbitrarily large under the equivalence in (i), and is at least one under the equivalence in (ii). For the families given in (iii), our current arguments do not suffice to show that the difference is still non-zero. However, we predict that for a family of non-fibered knots for which a circle valued Morse function as above requires at least $k\geq 1$ index one critical points, the number $k$ would provide a lower bound for the discrepancy between lowest genus of a simplified broken Lefschetz fibration and that of a general broken Lefschetz fibration, whereas this difference is zero for $(\K,[T])$. In any account, it is interesting to note that \emph{\emph{\textit{if this difference were to be constant for all homeomorphic pairs}}} $(X,\alpha)$ and $(X', \alpha')$, then the arguments in (ii) above would imply that there are \emph{no} exotic $S^4$s, or $\# m \CPb$s.

\vspace{0.2in}
\section{Related questions and problems}\label{Questions}

We finish with a few related problems. 

\begin{problem} \rm
Is there any closed oriented simply-connected $4$-manifold $X$ admitting a genus one simplified broken Lefschetz fibration and is not diffeomorphic to any one of the $4$-manifolds given in Theorem \ref{constructions}?
\end{problem}

\noindent Remarkably, from the signature zero condition Hayano obtained for spin genus one SBLFs in \cite{H2} it follows that the answer to this question is ``No'' if the $4$-manifold is spin. In the light of the classification of genus one SBLFs in \cite{BK} and Corollary \ref{SW} above, the question comes down to the existence of an exotic $\#m \CP \# n \CPb$ which, after a number of blow-ups, completely decomposes as a connected sum of $\CP$s and $\CPb$s, and has trivial Seiberg-Witten invariants if $m> 1$.

Thinking of the simply-connected $4$-manifolds that appear in Theorem \ref{constructions} as the summands of standard $4$-manifolds realizing all homeomorphism classes of simply-connected $4$-manifolds (modulo 11/8 conjecture), it remains to determine the broken genera of the following family of $4$-manifolds. Recall that the connected sum of $\K$ and $\Kb$ dissolves as $\# 22 (S^2 \x S^2)$.  

\begin{problem} \rm
Find the broken genera of $\# m \K \# n (S^2 \x S^2)$, for $m +n \geq 2$, with each orientation. 
\end{problem}
\noindent Note that by Proposition~\ref{achiralstab}, this problem essentially boils down to determining the genera of $\# m \K$, for $m \geq 2$. 

Any smooth $4$-manifold $X'$ homeomorphic to the $\K$ surface has $g(X') \geq g(\K)$. Moreover, the classification of genus one Lefschetz fibrations, together with Hayano's signature zero condition for spin genus one SBLFs with non-empty round locus, shows that the equality holds only if $X$ is diffeomorphic to $\K$. If $T$ is a regular elliptic fiber of the standard fibration on $\K$, then $g(\K)=g(\K, [T])=1$. So for any pair $(X', \alpha')$ homeomorphic to $g(\K, [T])$, we have $g(X', \alpha') \geq g(\K, [T])$, and the equality holds only if $(X', \alpha')$ is diffeomorphic to $(\K, [T])$. Clearly, the same holds for the homology class of the fiber of the ``dual'' elliptic fibration on $\K$. (Smoothly, these ``dual'' fibrations are the horizontal and vertical fibrations discussed in the proof of Theorem \ref{knotsurgery}.) 

\begin{problem} \rm
Let $X'$ be a smooth (resp. symplectic) $4$-manifold and $\psi: \K \to X'$ be a homeomorphism. Is it true that for any $\alpha'$ (resp. for any $\alpha'$ represented by an embedded symplectic surface) in $H_2(X')$, we have $g(X', \alpha') \geq g(\K, \psi^{-1}_*(\alpha'))$?
\end{problem}

\noindent A close look at the families of exotic $\K$ surfaces constructed in the proof of \linebreak Theorem~\ref{knotsurgery} shows that while for some homology classes the difference between broken genera can be arbitrarily large, it can remain the same for the others. 

Let $G$ be a finitely presented group. We can define a non-negative integer valued invariant of this group $G$ via broken genera: We set \emph{the broken genus of $G$}, denoted by $g(G)$, as the smallest broken genus among the broken genera of all possible closed oriented $4$-manifolds with fundamental group $G$. This gives ---yet another--- invariant of finitely presented groups via smooth $4$-manifolds. (See \cite{Kot} for an overview, and \cite{Kor} for a similar invariant defined via Lefschetz fibrations.) At least at the first glance, this invariant seems more computable than many others appeared in the literature before, yet to be seen how fine it is. We can summarize what we have shown so far:

\begin{prop}
Let $G$ be a finitely presented group. The only genus zero group is the trivial group. The only genus one groups are  $\Z$, $\Z_m$, $\Z \x \Z$, or $\Z \x \Z_m$. For the surface group $G = \pi_1(\Sigma_g)$, we have $g(G)=g$.  
\end{prop}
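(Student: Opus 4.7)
The plan is to prove the three assertions in turn.

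First, for the broken genus zero claim, I would invoke Theorem~\ref{constructions}: the only closed oriented $4$-manifolds with broken genus zero are $S^2 \x S^2$ and $\CP \# \CPb$, both simply connected. Hence the trivial group is the unique broken-genus-zero group.

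Second, for the genus one classification, the plan is to combine the handle-theoretic analysis in Lemma~\ref{pi1} with the monodromy classification of genus one SBLFs from \cite{BK}. A genus one SBLF on $X$ provides a handle decomposition with two $1$-handles (call them $a, b$), so $\pi_1(X)$ has at most two generators. The $2$-handle corresponding to the $T^2$-fiber of the higher side is attached along the commutator $[a,b]$, forcing $\pi_1(X)$ to be abelian. The $k$ Lefschetz $2$-handles and the $2$-handle of the round handle attach along simple closed curves on a reference $T^2$ fiber, which represent primitive elements of $\Z^2 = H_1(T^2)$; only the remaining ``pulled-back'' $2$-handle from the lower side may contribute a non-primitive relation. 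A Smith normal form computation then shows that the first invariant factor of the resulting relation matrix equals $1$, so $\pi_1(X) = \Z^2 / H$ must lie in the list $\{\Z^2, \Z, \Z \x \Z_m, \Z_m\}$. For the realization half I would exhibit concrete examples: $\Z \x \Z = \pi_1(T^2 \x S^2)$; $\Z = \pi_1(S^1 \x S^3)$ using the $T^2$-bundle structure over $S^2$ induced by the Hopf fibration of $S^3$; and $\Z_m$ and $\Z \x \Z_m$ via multiple-fiber or logarithmic transform constructions on $T^2$-bundles over $S^2$. Since none of these examples is $S^2 \x S^2$ or $\CP \# \CPb$, each has broken genus exactly $1$.

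Third, for the surface group $G = \pi_1(\Sigma_g)$, I would bound both sides. The upper bound $g(G) \leq g$ is realized by $S^2 \x \Sigma_g$, which by Proposition~\ref{unbounded} has broken genus exactly $g$. The lower bound uses Lemma~\ref{pi1}: for any closed oriented $4$-manifold $X$ with $\pi_1(X) = \pi_1(\Sigma_g)$ we have $b_1(X) = 2g$, and any genus $h$ SBLF on $X$ satisfies $2h \geq b_1(X) = 2g$, so $h \geq g$, giving $g(X) \geq g$.

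The hard part will be the genus one classification, specifically excluding groups of the form $\Z_m \x \Z_n$ with $\gcd(m,n) > 1$ and both factors non-trivial. This requires carefully accounting for which $2$-handle attaching circles in the SBLF handlebody are primitive and verifying that at most one of them (the pulled-back one from the lower side) can fail to be primitive, so that Smith normal form forces the first invariant factor of the relation matrix to equal $1$. The exact list of possible monodromies from \cite{BK} will be the essential input to check that no further groups slip through.
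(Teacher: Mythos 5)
A point of reference first: the paper states this proposition with no written proof at all --- it is introduced by ``we can summarize what we have shown so far'' --- so the intended argument is exactly the assembly you describe: Theorem~\ref{constructions} for the genus zero claim, the classification of genus one SBLFs in \cite{BK} for the genus one claim, and Proposition~\ref{unbounded} together with Lemma~\ref{pi1} for surface groups. Your first and third parts coincide with this and are correct. For the genus one part you do something genuinely more self-contained: rather than quoting the list of total spaces from \cite{BK}, you extract the group-theoretic conclusion directly from the handle decomposition underlying Lemma~\ref{pi1}. That exclusion argument is sound: the fiber $2$-handle abelianizes $\pi_1$; the Lefschetz and round $2$-handles attach along essential simple closed curves on a torus fiber and hence give primitive relators (relative minimality rules out null-homotopic vanishing cycles, and the round vanishing cycle must be non-separating since the lower side has genus zero); only the single pulled-back $2$-handle can be non-primitive; and a subgroup of $\Z^2$ all but at most one of whose generators are primitive either is generated by that one non-primitive element (giving $\Z\x\Z_m$) or contains a primitive vector, forcing the first invariant factor to be $1$ and hence a cyclic quotient. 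So $\Z_m\x\Z_n$ with both factors non-trivial is excluded, and the exclusion half needs less from \cite{BK} than you fear.

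The soft spot is the realization half. $T^2\x S^2$ and $S^1\x S^3$ (via the Hopf fibration) do realize $\Z\x\Z$ and $\Z$, but a logarithmic transform or multiple fiber construction on a $T^2$-bundle over $S^2$ produces another torus fibration over $S^2$, whose fundamental group is $\Z^2$ modulo a \emph{single} cyclic relation, i.e. $\Z^2$, $\Z$, or $\Z\x\Z_m$ --- never $\Z_m$ with $m\geq 2$. Realizing $\Z_m$ requires a rank-two relation lattice, hence an SBLF with non-empty round locus or Lefschetz critical points together with a suitably twisted lower-side gluing; this is precisely the point where the explicit classification in \cite{BK} must be imported (as the paper implicitly does), or where you would need to exhibit such a fibration by hand.
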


A natural problem within this context is:

\begin{problem} \label{groupgenera}
Determine $g(G)$ for a given finitely presented group. 
\end{problem}

\vspace{0.35in}
\noindent \textit{Acknowledgements.} We would like to thank Ron Fintushel for a helpful e-mail exchange.
The author was partially supported by the NSF grant DMS-0906912

\vspace{1in}

\end{document}